\newtheorem{theorem}{Theorem}[section]
\newtheorem{prop}[theorem]{Proposition}
\theoremstyle{definition}
\newtheorem{defn}[theorem]{Definition}
\newtheorem{lemma}[theorem]{Lemma}
\newtheorem{prop-def}{Proposition-Definition}[section]
\newtheorem{coro-def}{Corollary-Definition}[section]
\newtheorem{exam}{Example}[section]
\newcommand{\nc}{\newcommand}
\nc{\tred}[1]{\textcolor{red}{#1}}
\nc{\tblue}[1]{\textcolor{blue}{#1}}
\nc{\tgreen}[1]{\textcolor{green}{#1}}
\nc{\tpurple}[1]{\textcolor{purple}{#1}}
\nc{\btred}[1]{\textcolor{red}{\bf #1}}
\nc{\btblue}[1]{\textcolor{blue}{\bf #1}}
\nc{\btgreen}[1]{\textcolor{green}{\bf #1}}
\nc{\btpurple}[1]{\textcolor{purple}{\bf #1}}
\nc{\NN}{{\mathbb N}}
\nc{\ncsha}{{\mbox{\cyr X}^{\mathrm NC}}} \nc{\ncshao}{{\mbox{\cyr
X}^{\mathrm NC}_0}}
\renewcommand{\frak}{\mathfrak}
\newcommand{\efootnote}[1]{}
\renewcommand{\textbf}[1]{}
\newcommand{\delete}[1]{}
\nc{\mlabel}[1]{\label{#1}}  
\nc{\mcite}[1]{\cite{#1}}  
\nc{\mref}[1]{\ref{#1}}  
\nc{\mbibitem}[1]{\bibitem{#1}} 
\nc{\mlabel}[1]{\label{#1}  
{\hfill \hspace{1cm}{\small\tt{{\ }\hfill(#1)}}}}
\nc{\mcite}[1]{\cite{#1}{\small{\tt{{\ }(#1)}}}}  
\nc{\mref}[1]{\ref{#1}{{\tt{{\ }(#1)}}}}  
\nc{\mbibitem}[1]{\bibitem[\bf #1]{#1}} 
\nc{\mo}{\calm}
\nc{\barot}{\overline{\otimes}}
\nc{\dm}{\diamond_\mo}
\nc{\oot}{\overline{\ot}}
\nc{\opa}{\ast} \nc{\opb}{\odot} \nc{\op}{\bullet} \nc{\pa}{\frakL}
\nc{\arr}{\rightarrow} \nc{\lu}[1]{(#1)} \nc{\mult}{\mrm{mult}}
\nc{\diff}{\mathfrak{Diff}}
\nc{\opc}{\sharp}\nc{\opd}{\natural}
\nc{\ope}{\circ}
\nc{\dpt}{\mathrm{d}}
\nc{\diam}{alternating\xspace}
\nc{\Diam}{Alternating\xspace}
\nc{\cdiam}{canonical alternating\xspace}
\nc{\Cdiam}{Canonical alternating\xspace}
\nc{\AW}{\mathcal{A}}
\nc{\mrbo}{modified RBO\xspace }
\nc{\ari}{\mathrm{ar}}
\nc{\lef}{\mathrm{lef}}
\nc{\Sh}{\mathrm{ST}}
\nc{\Cr}{\mathrm{Cr}}
\nc{\st}{{Schr\"oder tree}\xspace}
\nc{\sts}{{Schr\"oder trees}\xspace}
\nc{\vertset}{\Omega} 
\nc{\pb}{{\mathrm{pb}}}
\nc{\Lf}{{\mathrm{Lf}}}
\nc{\lft}{{left tree}\xspace}
\nc{\lfts}{{left trees}\xspace}
\nc{\fat}{{fundamental averaging tree}\xspace}
\nc{\fats}{{fundamental averaging trees}\xspace}
\nc{\avt}{\mathrm{Avt}}
\nc{\rass}{{\mathit{RAss}}}
\nc{\aass}{{\mathit{AAss}}}
\nc{\vin}{{\mathrm Vin}}    
\nc{\lin}{{\mathrm Lin}}    
\nc{\inv}{\mathrm{I}n}
\nc{\gensp}{V} 
\nc{\genbas}{\mathcal{V}} 
\nc{\bvp}{V_P}     
\nc{\gop}{{\,\omega\,}}     
\nc{\bin}[2]{ (_{\stackrel{\scs{#1}}{\scs{#2}}})}  
\nc{\binc}[2]{ \left (\!\! \begin{array}{c} \scs{#1}\\
    \scs{#2} \end{array}\!\! \right )}  
\nc{\bincc}[2]{  \left ( {\scs{#1} \atop
    \vspace{-1cm}\scs{#2}} \right )}  
\nc{\bs}{\bar{S}} \nc{\cosum}{\sqsubset} \nc{\la}{\longrightarrow}
\nc{\rar}{\rightarrow} \nc{\dar}{\downarrow} \nc{\dprod}{**}
\nc{\dap}[1]{\downarrow \rlap{$\scriptstyle{#1}$}}
\nc{\md}{\mathrm{dth}} \nc{\uap}[1]{\uparrow
\rlap{$\scriptstyle{#1}$}} \nc{\defeq}{\stackrel{\rm def}{=}}
\nc{\disp}[1]{\displaystyle{#1}} \nc{\dotcup}{\
\displaystyle{\bigcup^\bullet}\ } \nc{\gzeta}{\bar{\zeta}}
\nc{\hcm}{\ \hat{,}\ } \nc{\hts}{\hat{\otimes}}
\nc{\free}[1]{\bar{#1}}
\nc{\uni}[1]{\tilde{#1}} \nc{\hcirc}{\hat{\circ}} \nc{\lleft}{[}
\nc{\lright}{]} \nc{\lc}{\lfloor} \nc{\rc}{\rfloor}
\nc{\curlyl}{\left \{ \begin{array}{c} {} \\ {} \end{array}
    \right .  \!\!\!\!\!\!\!}
\nc{\curlyr}{ \!\!\!\!\!\!\!
    \left . \begin{array}{c} {} \\ {} \end{array}
    \right \} }
\nc{\longmid}{\left | \begin{array}{c} {} \\ {} \end{array}
    \right . \!\!\!\!\!\!\!}
\nc{\onetree}{\bullet} \nc{\ora}[1]{\stackrel{#1}{\rar}}
\nc{\ola}[1]{\stackrel{#1}{\la}}
\nc{\ot}{\otimes} \nc{\mot}{{{\boxtimes\,}}}
\nc{\otm}{\overline{\boxtimes}} \nc{\sprod}{\bullet}
\nc{\scs}[1]{\scriptstyle{#1}} \nc{\mrm}[1]{{\rm #1}}
\nc{\margin}[1]{\marginpar{\rm #1}}   
\nc{\dirlim}{\displaystyle{\lim_{\longrightarrow}}\,}
\nc{\invlim}{\displaystyle{\lim_{\longleftarrow}}\,}
\nc{\mvp}{\vspace{0.3cm}} \nc{\tk}{^{(k)}} \nc{\tp}{^\prime}
\nc{\ttp}{^{\prime\prime}} \nc{\svp}{\vspace{2cm}}
\nc{\vp}{\vspace{8cm}} \nc{\proofbegin}{\noindent{\bf Proof: }}
\nc{\proofend}{$\blacksquare$ \vspace{0.3cm}}
\nc{\modg}[1]{\!<\!\!{#1}\!\!>}
\nc{\intg}[1]{F_C(#1)} \nc{\lmodg}{\!
<\!\!} \nc{\rmodg}{\!\!>\!}
\nc{\cpi}{\widehat{\Pi}}
\nc{\sha}{{\mbox{\cyr X}}}  
\nc{\ssha}{{\mbox{\cyrs X}}} 
\nc{\shpr}{\diamond}    
\nc{\shp}{\ast} \nc{\shplus}{\shpr^+}
\nc{\shprc}{\shpr_c}    
\nc{\msh}{\ast} \nc{\zprod}{m_0} \nc{\oprod}{m_1}
\nc{\vep}{\varepsilon} \nc{\labs}{\mid\!} \nc{\rabs}{\!\mid}
\nc{\sqmon}[1]{\langle #1\rangle}
\nc{\mmbox}[1]{\mbox{\ #1\ }} \nc{\dep}{\mrm{dep}} \nc{\fp}{\mrm{FP}}
\nc{\rchar}{\mrm{char}} \nc{\End}{\mrm{End}} \nc{\Fil}{\mrm{Fil}}
\nc{\Mor}{Mor\xspace} \nc{\gmzvs}{gMZV\xspace}
\nc{\gmzv}{gMZV\xspace} \nc{\mzv}{MZV\xspace}
\nc{\mzvs}{MZVs\xspace} \nc{\Hom}{\mrm{Hom}} \nc{\id}{\mrm{id}}
\nc{\im}{\mrm{im}} \nc{\incl}{\mrm{incl}} \nc{\map}{\mrm{Map}}
\nc{\mchar}{\rm char} \nc{\nz}{\rm NZ} \nc{\supp}{\mathrm Supp}
\nc{\Alg}{\mathbf{Alg}} \nc{\Bax}{\mathbf{Bax}} \nc{\bff}{\mathbf f}
\nc{\bfk}{{\bf k}} \nc{\bfone}{{\bf 1}} \nc{\bfx}{\mathbf x}
\nc{\bfy}{\mathbf y}
\nc{\base}[1]{\bfone^{\otimes ({#1}+1)}} 
\nc{\Cat}{\mathbf{Cat}}
\nc{\detail}{\marginpar{\bf More detail}
    \noindent{\bf Need more detail!}
    \svp}
\nc{\Int}{\mathbf{Int}} \nc{\Mon}{\mathbf{Mon}}
\nc{\rbtm}{{shuffle }} \nc{\rbto}{{Rota-Baxter }}
\nc{\remarks}{\noindent{\bf Remarks: }} \nc{\Rings}{\mathbf{Rings}}
\nc{\Sets}{\mathbf{Sets}} \nc{\wtot}{\widetilde{\odot}}
\nc{\wast}{\widetilde{\ast}} \nc{\bodot}{\bar{\odot}}
\nc{\bast}{\bar{\ast}} \nc{\hodot}[1]{\odot^{#1}}
\nc{\hast}[1]{\ast^{#1}} \nc{\mal}{\mathcal{O}}
\nc{\tet}{\tilde{\ast}} \nc{\teot}{\tilde{\odot}}
\nc{\oex}{\overline{x}} \nc{\oey}{\overline{y}}
\nc{\oez}{\overline{z}} \nc{\oef}{\overline{f}}
\nc{\oea}{\overline{a}} \nc{\oeb}{\overline{b}}
\nc{\weast}[1]{\widetilde{\ast}^{#1}}
\nc{\weodot}[1]{\widetilde{\odot}^{#1}} \nc{\hstar}[1]{\star^{#1}}
\nc{\lae}{\langle} \nc{\rae}{\rangle}
\nc{\lf}{\lfloor}
\nc{\rf}{\rfloor}
\nc{\QQ}{{\mathbb Q}}
\nc{\RR}{{\mathbb R}} \nc{\ZZ}{{\mathbb Z}}
\nc{\CC}{{\mathbb C}}
\nc{\cala}{{\mathcal A}} \nc{\calb}{{\mathcal B}}
\nc{\calc}{{\mathcal C}}
\nc{\cald}{{\mathcal D}} \nc{\cale}{{\mathcal E}}
\nc{\calf}{{\mathcal F}} \nc{\calg}{{\mathcal G}}
\nc{\calh}{{\mathcal H}} \nc{\cali}{{\mathcal I}}
\nc{\call}{{\mathcal L}} \nc{\calm}{{\mathcal M}}
\nc{\caln}{{\mathcal N}} \nc{\calo}{{\mathcal O}}
\nc{\calp}{{\mathcal P}} \nc{\calr}{{\mathcal R}}
\nc{\cals}{{\mathcal S}} \nc{\calt}{{\mathcal T}}
\nc{\calu}{{\mathcal U}} \nc{\calw}{{\mathcal W}} \nc{\calk}{{\mathcal K}}
\nc{\calx}{{\mathcal X}} \nc{\CA}{\mathcal{A}}
\nc{\fraka}{{\mathfrak a}} \nc{\frakA}{{\mathfrak A}}
\nc{\frakb}{{\mathfrak b}} \nc{\frakB}{{\mathfrak B}}
\nc{\frakc}{{\mathfrak c}}
\nc{\frakD}{{\mathfrak D}} \nc{\frakF}{\mathfrak{F}}
\nc{\frakf}{{\mathfrak f}} \nc{\frakg}{{\mathfrak g}}
\nc{\frakJ}{\mathfrak{J}}
\nc{\frakH}{{\mathfrak H}} \nc{\frakL}{{\mathfrak L}}
\nc{\frakM}{{\mathfrak M}} \nc{\bfrakM}{\overline{\frakM}}
\nc{\frakm}{{\mathfrak m}} \nc{\frakP}{{\mathfrak P}}
\nc{\frakN}{{\mathfrak N}} \nc{\frakp}{{\mathfrak p}}
\nc{\frakS}{{\mathfrak S}} \nc{\frakT}{\mathfrak{T}}
\nc{\BS}{\mathbb{S
}}
\font\cyr=wncyr10 \font\cyrs=wncyr7
\nc{\li}[1]{\textcolor{red}{Li:#1}}
\nc{\xigou}[1]{\textcolor{blue}{Xigou: #1}}
\nc{\xing}[1]{\textcolor{purple}{Xing:#1}}
\nc{\UN}{U_{N}}
\nc{\FN}{F_{\mathrm M}}
\nc{\altx}{\Lambda}
\nc{\spr}{\cdot}
\nc{\rts}{\stackrel{\rightarrow}{\shpr}}
\nc{\ox}{\overline{\frak x}}
\nc{\oX}{\overline{X}}
\nc{\epro}{\diamond} \nc{\epr}{\overline{\diamond}_\mo}
\nc{\cum}{{\textstyle \int}}
\begin{document}

\title{Commutative modified Rota-Baxter algebras, shuffle products and Hopf algebras}

\author{Xigou Zhang}
\address{Department of Mathematics, Jiangxi Normal University, Nanchang, Jiangxi 330022, China}
\email{xyzhang@jxnu.edu.cn}

\author{Xing Gao}
\address{School of Mathematics and Statistics, Key Laboratory of Applied Mathematics and Complex Systems, Lanzhou University, Lanzhou, 730000, P.R. China}
\email{gaoxing@lzu.edu.cn}

\author{Li Guo}
\address{Department of Mathematics and Computer Science,
         Rutgers University,
         Newark, NJ 07102, USA}
\email{liguo@rutgers.edu}

\date{\today}
\begin{abstract}
In this paper, we begin a systematic study of modified Rota-Baxter algebras, as an associative analogue of the modified classical Yang-Baxter equation. We construct free commutative modified Rota-Baxter algebras by a variation of the shuffle product and describe the structure both recursively and explicitly. We then provide these algebras with a Hopf algebra structure by applying a Hochschild cocycle.
\end{abstract}

\keywords{Modified Rota-Baxter algebra, Rota-Baxter algebra, shuffle product, bialgebra, Hopf algebra}

\subjclass[2010]{16T99,16W99,16S10}

\maketitle

\tableofcontents

\setcounter{section}{0}

\allowdisplaybreaks

\section{Introduction}

A {\bf Rota-Baxter operator} of weight $\lambda$ (where $\lambda$ is a constant) is defined to be a linear operator $P$ on an associative algebra $R$ satisfying
$$ P(x)P(y)=P(P(x)y) + P(xP(y)) +\lambda P(xy) \  \text{for all}\  x, y\in R.$$
Then $(R, P)$ is called a {\bf Rota-Baxter algebra}. The study of Rota-Baxter algebras originated from the work~\mcite{Ba} of G. Baxter on fluctuation theory of probability in 1960. It was studied by well-known mathematicians such as Atkinson, Cartier and Rota~\mcite{FV,Ca,Ro} in the 1960-70s. Its study has experienced a quite remarkable renascence in the recent decades with many applications in mathematics and physics~\mcite{Ag,BBGN,EG,GGZ,GK1,GK3,JZ,ML,MY}, most notably the work of Connes and Kreimer on renormalization of quantum field theory~\mcite{CK,EGK,EGM}. See~\mcite{Gub} for further details and references.

Back in the 1980s, Semonov-Tian-Shansky made the discovery that, under suitable conditions, the Rota-Baxter identity on a Lie algebra is precisely the operator form of the classical Yang-Baxter equation, named after the well-known physicists. Also introduced in that paper is the closely related {\it modified} classical Yang-Baxter equation~\mcite{BGN1,Bo,KS,Sem}:

$$ [P(x),P(y)]=P[P(x),y]+P[x,P(y)]-xy,$$
later found applications in the study of generalized Lax pairs and affine geometry on Lie groups. A modified Rota-Baxter algebra~\mcite{E} is defined to be an associative algebra with a linear operator which satisfies the associative analogue of the modified classical Yang-Baxter equation and has since been applied to the study of extended $\calo$-operators, associative Yang-Baxter equations, infinitesimal bialgebras and dendriform algebras~\mcite{BGN2,BGN3}.

A modified classical Yang-Baxter equation can be obtained from the classical Yang-Baxter equation by a linear transformation, but plays independent role in the physics study. Thus from an application point of view, it is worthwhile to study the algebraic structure of the modified classical Yang-Baxter equations and its associative analogues, the modified Rota-Baxter operators. We begin a systematic study of modified Rota-Baxter algebras, emphasizing the commutative case. We consider modified Rota-Baxter algebras of any weight $\kappa$, with the classical modified Rota-Baxter algebra being the case when $\kappa$ is a negative square. As we will notice later in the paper, the structure of modified Rota-Baxter algebras differs significantly from that of Rota-Baxter algebras. From theoretical point of view, an algebraic or combinatorial object is often studied by multiple structures it possesses. For example, the transformation matrices of the space of symmetric functions with respect to its various bases are essential in the study of algebraic combinatorics. We also hope that a systematic study of modified Rota-Baxter algebras will shed new light in understanding Rota-Baxter algebras.

The layout of the paper is as follows. Section~\mref{sec:bas} gives some basic properties of modified Rota-Baxter algebras. Section~\mref{sec:free} provides the construction of free commutative modified Rota-Baxter algebras on another commutative algebra. Section~\mref{sec:hopf} equips these free commutative modified Rota-Baxter algebras with a Hopf algebra structure, when the weights are negative squares and when the base algebra is a connected Hopf algebra. The method is to apply a suitable cocycle property to give a recursion.
\smallskip

\noindent
{\bf Notations.} Throughout this paper, an algebra is taken to be over a commutative unitary algebra $\bfk$, as are the linear maps and tensor products.

\section{General properties of modified Rota-Baxter algebras}
\mlabel{sec:bas}
We give the general definition of modified Rota-Baxter algebras.

\begin{defn}
Let $R$ be a $\bfk$-algebra and $\kappa\in \bfk$. A linear map $P:R\to R$ is called a {\bf modified Rota-Baxter operator} of weight $\kappa$ if $P$ satisfies the operator identity
\begin{equation}
P(u)P(v)=P(uP(v))+P(P(u)v)+\kappa uv  \  \text{for all } u, v\in R.
\mlabel{eq:mrbo}
\end{equation}
\end{defn}
Then the pair $(R, P)$ or simply $R$ is called a {\bf modified Rota-Baxter algebra} of weight  $\kappa$.
The class of modified Rota-Baxter algebras of weight $\kappa$ forms a category, with the morphisms being algebra homomorphisms between the algebras that commute the linear operators.

As observed in~\mcite{E}, there is an interesting relation between a Rota-Baxter algebra  and  a modified Rota-Baxter algebra.

\begin{lemma}
Let $(R, P)$ be a Rota-Baxter algebra of weight  $\lambda$. Define $Q:=-\lambda\, \id -2P $. Then $(R, Q)$ is a modified Rota-Baxter algebra of weight  $-\lambda^2$. In particular, a Rota-Baxter algebra of weight zero is a modified Rota-Baxter algebra of weight zero.
\mlabel{lem:ptoq}
 \end{lemma}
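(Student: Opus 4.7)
The plan is a direct verification: substitute $Q = -\lambda\,\id - 2P$ into both sides of the modified Rota-Baxter identity of weight $-\lambda^2$, expand, and use the Rota-Baxter identity of weight $\lambda$ to reconcile the two sides.

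First I would compute the left-hand side. Expanding bilinearly,
\[
Q(u)Q(v) = (-\lambda u - 2P(u))(-\lambda v - 2P(v)) = \lambda^2 uv + 2\lambda\, u P(v) + 2\lambda\, P(u) v + 4 P(u)P(v),
\]
and then replace $P(u)P(v)$ using the weight-$\lambda$ Rota-Baxter identity to get an expression involving $P(uP(v))$, $P(P(u)v)$ and $\lambda P(uv)$ as well as the $\lambda$-linear $uv$, $uP(v)$, $P(u)v$ terms.

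Next I would compute the right-hand side. Apply $Q$ to $uQ(v) = -\lambda uv - 2uP(v)$ and to $Q(u)v = -\lambda uv - 2P(u)v$, using linearity of $Q$ and the definition $Q = -\lambda\,\id - 2P$ once more. This produces $Q(uQ(v))$ and $Q(Q(u)v)$ as $\bfk$-linear combinations of $uv$, $uP(v)$, $P(u)v$, $P(uv)$, $P(uP(v))$ and $P(P(u)v)$. Adding the correction term $\kappa uv = -\lambda^2 uv$ with $\kappa = -\lambda^2$ should then cancel the surplus $\lambda^2 uv$ contribution and yield exactly the expression obtained above for $Q(u)Q(v)$.

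I expect no real obstacle; the calculation is mechanical and the identity $\kappa = -\lambda^2$ is forced precisely by tallying the coefficient of $uv$ (the only term where $P$ does not appear). The weight-zero special case is immediate from this, since setting $\lambda = 0$ gives $Q = -2P$ and $\kappa = 0$, and the computation collapses. I would write the proof as a single displayed chain, keeping the terms grouped by their $P$-degree so that the cancellation forced by the choice $\kappa = -\lambda^2$ is transparent.
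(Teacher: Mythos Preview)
Your proposal is correct: the direct substitution and expansion you outline goes through exactly as you describe, and the coefficient bookkeeping forces $\kappa=-\lambda^2$. The paper itself gives no proof of this lemma---it is stated with a reference to~\mcite{E}---so there is nothing to compare against; your argument is the standard verification. One small remark on the final sentence: for the weight-zero special case the paper's ``in particular'' is most naturally read as saying that $(R,P)$ itself is a modified Rota-Baxter algebra of weight zero (since the two identities coincide literally when $\lambda=\kappa=0$), rather than going through $Q=-2P$; but either reading yields the claim.
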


Thus only modified Rota-Baxter algebras whose weight are negative squares correspond to Rota-Baxter algebras. From this relation, the following examples of modified Rota-Baxter algebras are immediate.

\begin{exam} (Integration)
Let $R$ be the $\RR$-algebra of continuous functions on $\RR$. Define $P:R\to R$ by
the integration $$P(f)(x) = \cum_0^x f(t) dt. $$
Then $P$ is a Rota-Baxter operator of weight 0~\cite[Example 1.1.4]{Gub} and so a modified
Rota-Baxter of weight 0 by Eq.~(\mref{eq:mrbo}).
\end{exam}

\begin{exam} (Scalar product)
Let $R$ be a \bfk-algebra. For any given $\lambda\in \bfk$, the operator
$$P_\lambda: R \to R\, \quad r\mapsto -\lambda r$$
is a Rota-Baxter operator of weight $\lambda$~\cite[Exercise 1.1.7]{Gub}. So the operator
$$Q_\lambda: = -\lambda\id - 2P : R \to R\, \quad r\mapsto \lambda r$$
is a modified Rota-Baxter operator of weight $-\lambda^2$.
\end{exam}

By observation, a linear operator $P$ satisfies Eq.~(\mref{eq:mrbo}) if and only if $-P$ satisfies the same equation. Thus we have
\begin{prop}
Let $R$ be a \bfk-algebra and $P:R\to R$ a linear operator.
Then $P$ is a modified Rota-Baxter operator on $R$ if and only if $-P$ is one.
\mlabel{pp:pmp}
\end{prop}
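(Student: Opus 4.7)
The plan is to verify the equivalence directly from the defining identity~(\mref{eq:mrbo}), exploiting the fact that the operator $P$ appears on both sides of the identity an even number of times in each term except possibly inside the argument. Concretely, writing $Q := -P$, I want to show that $Q$ satisfies
$$Q(u)Q(v)=Q(uQ(v))+Q(Q(u)v)+\kappa uv\quad\text{for all }u,v\in R$$
if and only if $P$ does.

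First I would compute the left-hand side for $Q=-P$: by $\bfk$-linearity of $P$,
$$Q(u)Q(v)=(-P(u))(-P(v))=P(u)P(v).$$
Next I would compute each term on the right-hand side, again using linearity to pull the minus signs through:
$$Q(uQ(v))=-P\bigl(u(-P(v))\bigr)=-P(-uP(v))=P(uP(v)),$$
and analogously $Q(Q(u)v)=P(P(u)v)$. The term $\kappa uv$ is unaffected.

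Assembling these identities shows that the modified Rota-Baxter relation for $Q$ is literally the same equation as the modified Rota-Baxter relation for $P$, so one holds if and only if the other does. Since the roles of $P$ and $-P$ are symmetric (replacing $-P$ by $-(-P)=P$), the equivalence is complete. There is no real obstacle here; the only thing to be careful about is making sure the $\bfk$-linearity of $P$ is invoked when distributing the sign inside $P(u(-P(v)))$ and $P((-P(u))v)$, which is exactly what allows both minus signs in each term to cancel in pairs.
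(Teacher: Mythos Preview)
Your proof is correct and is exactly the observation the paper has in mind: the paper does not give a formal proof but simply notes ``By observation, a linear operator $P$ satisfies Eq.~(\mref{eq:mrbo}) if and only if $-P$ satisfies the same equation,'' which is precisely the sign-cancellation you carried out.
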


Furthermore we characterize involutional modified Rota-Baxter operators as follows.

\begin{theorem}
Suppose that 2 is invertible in $\bfk$. Let $R$ be a \bfk-algebra and $P:R\to R$ be a linear operator. Then the following statements are equivalent.
\begin{enumerate}
\item The operator $P$ is an involutional (i.e. $P^2 = \id$)
modified Rota-Baxter operator of weight -1 on $R$; \mlabel{it:pmrb}

\item There is a \bfk-module direct sum
decomposition $R = R_1 \oplus R_2$ of $R$ into nonunitary \bfk-subalgebras $R_1$ and $R_2$
of $R$ such that
$$P:R\to R, \quad u_1 + u_2 \mapsto u_1 - u_2 $$
for $u_1\in R_1$ and $u_2\in R_2$. \mlabel{it:idecom}
\end{enumerate}
\mlabel{imrbo}
\end{theorem}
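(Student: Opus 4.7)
The plan is to prove the equivalence by exploiting, in the direction (a) $\Rightarrow$ (b), the two idempotents furnished by an involution in characteristic different from $2$, and in the direction (b) $\Rightarrow$ (a), a direct algebraic verification in which the troublesome cross terms cancel on their own.

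For (a) $\Rightarrow$ (b), I would set
\[
e_+ := \tfrac{1}{2}(\id + P), \qquad e_- := \tfrac{1}{2}(\id - P),
\]
and use $P^2 = \id$ to check that $e_+^2 = e_+$, $e_-^2 = e_-$, $e_+ e_- = e_- e_+ = 0$, and $e_+ + e_- = \id$. Setting $R_1 := e_+(R)$ and $R_2 := e_-(R)$ then gives a $\bfk$-module decomposition $R = R_1 \oplus R_2$ in which $R_1$ is the $+1$-eigenspace and $R_2$ is the $-1$-eigenspace of $P$, so that $P(u_1 + u_2) = u_1 - u_2$ for $u_i \in R_i$. The substantive step is showing that $R_1$ and $R_2$ are closed under multiplication. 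For $u_1, v_1 \in R_1$, substituting $P(u_1) = u_1$ and $P(v_1) = v_1$ into Eq.~(\mref{eq:mrbo}) with $\kappa = -1$ yields $u_1 v_1 = 2 P(u_1 v_1) - u_1 v_1$, hence $P(u_1 v_1) = u_1 v_1$, so $u_1 v_1 \in R_1$. An analogous computation for $u_2, v_2 \in R_2$, where $P(u_2) = -u_2$ and $P(v_2) = -v_2$, gives $P(u_2 v_2) = -u_2 v_2$, so $u_2 v_2 \in R_2$. (Neither subalgebra need contain the unit, matching the ``nonunitary'' qualifier.)

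For (b) $\Rightarrow$ (a), involutionality is immediate from $P(u_1 + u_2) = u_1 - u_2$. For Eq.~(\mref{eq:mrbo}) with $\kappa = -1$, I would write $u = u_1 + u_2$, $v = v_1 + v_2$ and expand both sides. The left side becomes
\[
P(u)P(v) = (u_1 - u_2)(v_1 - v_2) = u_1 v_1 - u_1 v_2 - u_2 v_1 + u_2 v_2.
\]
Computing $P(u)v$ and $uP(v)$ and adding, the cross terms $u_1 v_2$ and $u_2 v_1$ (whose location in $R_1$ or $R_2$ is unknown) cancel, leaving $P(u)v + uP(v) = 2 u_1 v_1 - 2 u_2 v_2$ inside the operator $P$. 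Since $u_1 v_1 \in R_1$ and $u_2 v_2 \in R_2$, one gets $P(P(u)v) + P(uP(v)) = 2 u_1 v_1 + 2 u_2 v_2$, and subtracting $uv = u_1 v_1 + u_1 v_2 + u_2 v_1 + u_2 v_2$ reproduces $P(u)P(v)$ exactly.

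The only real obstacle is conceptual rather than technical: one must notice that the unknown cross terms $u_1 v_2$ and $u_2 v_1$ never need to be analyzed because they cancel before $P$ is applied in the verification, and correspondingly must remember to invoke the hypothesis that $2$ is invertible in $\bfk$ at precisely two places, namely when defining $e_\pm$ and when deducing $P(u_1 v_1) = u_1 v_1$ and $P(u_2 v_2) = -u_2 v_2$ from the respective relations $2 P(u_1 v_1) = 2 u_1 v_1$ and $2 P(u_2 v_2) = -2 u_2 v_2$.
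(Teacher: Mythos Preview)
Your proof is correct and follows essentially the same approach as the paper: the eigenspace decomposition via $\id\pm P$ for (a)$\Rightarrow$(b), and direct expansion of the modified Rota--Baxter identity for (b)$\Rightarrow$(a). The only minor variation is in the closure argument: the paper verifies that $(u+P(u))(v+P(v))\in(\id+P)(R)$ by expanding and using Eq.~(\mref{eq:mrbo}) directly (which does not need $2$ invertible at that step), whereas you substitute $P(u_1)=u_1$, $P(v_1)=v_1$ into Eq.~(\mref{eq:mrbo}) and cancel the factor $2$; both are equally valid and lead to the same conclusion.
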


As an examples of Theorem~\mref{imrbo} is $R=\CC[\vep^{-1},\vep]]$, the algebra of Laurent series for which we take $R_1:=\CC[[\vep]]$ and $R_2:=\vep^{-1}\CC[\vep^{-1}]]$.

\begin{proof}
((\ref{it:idecom})$\Longrightarrow$ (\ref{it:pmrb})) Let $u=u_1+u_2$ and $v = v_1 + v_2$ be in $R$ with $u_1, v_1\in R_1$ and
$u_2, v_2\in R_2$. Then $u_1v_1\in R_1$ and $u_2v_2\in R_2$. On the one hand, we have
$$P(u)P(v) = (u_1 - u_2) (v_1 -v_2) = u_1v_1 - u_1v_2 - u_2v_1 + u_2v_2.$$
On the other hand,
\begin{align*}
P(uP(v)) =& P((u_1 +u_2) (v_1 - v_2)) = P(u_1v_1 - u_1v_2 + u_2v_1 - u_2v_2)\\
=& u_1v_1 - P(u_1v_2) + P(u_2v_1) + u_2v_2,\\
P(P(u)v) =& P((u_1 - u_2)(v_1+v_2)) = P(u_1v_1 + u_1v_2 - u_2v_1 - u_2v_2)\\
=& u_1v_1 + P(u_1v_2) - P(u_2v_1) + u_2v_2,\\
uv =& (u_1 + u_2)(v_1+v_2) = u_1v_1 + u_1v_2 + u_2v_1 + u_2v_2.
\end{align*}
In summary, we have $$ P(u)P(v) = P(uP(v))+P(P(u)v) - uv,$$
whence $P$ is a modified Rota-Baxter operator of weight -1 on $R$. Furthermore,
$$P^2(u) = P(u_1-u_2) = u_1 + u_2 = u$$
and so $P^2 = \id$.

((\ref{it:pmrb}) $\Longrightarrow$ (\ref{it:idecom}))
Write $$R_1:= (\id + P)(R)\,\text{ and }\, R_2:= (\id - P)(R).$$
Since $P$ is \bfk-linear, $R_1$ and $R_2$ are submodules of $R$. To prove $R_1$
is a subalgebra of $R$, let $u+P(u)$ and $v+P(v)$ be in $R_1$. Then
\begin{align*}
(u+P(u))(v+P(v)) =& uv + uP(v) + P(u)v + P(u)P(v) \\
=& uv + uP(v) + P(u)v +P(uP(v)) + P(P(u)v) - uv\\
=& (\id+P)(uP(v)) + (\id+P)(P(u)v) \in R_1.
\end{align*}
Similarly, $R_2$ is a subalgebra of $R$.
Since $P$ is involutive and 2 is invertible in $\bfk$, we have

$$R = R_1\oplus R_2,$$
with $R_1$ and $R_2$ being the eigen-submodules of eigenvalues 1 and -1 respectively.
Thus for any $u = u_1 + u_2\in R$ with $u_1\in R_1$ and $u_2\in R_2$,  we have $P(u)=u_1-u_2$.
\end{proof}

Modified Rota-Baxter algebras of various weights can be related to one another as follows.

\begin{prop}
Let $R$ be a \bfk-algebra.
\begin{enumerate}
\item If $P$ is a modified Rota-Baxter operator of weight 1 on $R$, then $\kappa P$
is a modified Rota-Baxter operator of weight $\kappa^2$ on $R$; \mlabel{it:w1tok1}

\item If $P$ is a modified Rota-Baxter operator of weight $\kappa^2$ on $R$ and $\kappa$
is invertible in $\bfk$, then $\kappa^{-1} P$
is a modified Rota-Baxter operator of weight 1 on $R$. \mlabel{it:w1tok2}
\end{enumerate}
\mlabel{pp:w1tok}
\end{prop}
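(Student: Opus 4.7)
The plan is to prove both parts by direct substitution into the defining identity~(\ref{eq:mrbo}); no conceptual input beyond rescaling is needed, so the work is essentially bookkeeping of the scalar factors of $\kappa$.

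For part~(\ref{it:w1tok1}), I would set $Q := \kappa P$ and compute each of the three terms $Q(u)Q(v)$, $Q(uQ(v))$, $Q(Q(u)v)$ in terms of $P$. Using $\bfk$-linearity of $P$ one immediately gets
\[
Q(u)Q(v) = \kappa^2 P(u)P(v), \quad Q(uQ(v)) = \kappa^2 P(uP(v)), \quad Q(Q(u)v) = \kappa^2 P(P(u)v).
\]
Then I would multiply the weight-$1$ identity $P(u)P(v) = P(uP(v)) + P(P(u)v) + uv$ through by $\kappa^2$ and read off that $Q(u)Q(v) = Q(uQ(v)) + Q(Q(u)v) + \kappa^2 uv$, which is precisely the weight-$\kappa^2$ identity for $Q$.

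For part~(\ref{it:w1tok2}), the argument is symmetric. Setting $Q := \kappa^{-1} P$ (which requires invertibility of $\kappa$ in $\bfk$, as assumed), the same three evaluations give factors of $\kappa^{-2}$ in front of the corresponding $P$-expressions. Substituting the weight-$\kappa^2$ identity $P(u)P(v) = P(uP(v)) + P(P(u)v) + \kappa^2 uv$ and dividing by $\kappa^2$ yields $Q(u)Q(v) = Q(uQ(v)) + Q(Q(u)v) + uv$, which is the weight-$1$ identity for $Q$.

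There is no real obstacle here; the only place to be careful is to verify that the scalar factors balance correctly, in particular that the $\kappa^2$ in the $uv$-term on the right of the weight-$\kappa^2$ identity is exactly what cancels the $\kappa^{-2}$ coming from the rescaling in part~(\ref{it:w1tok2}). One could alternatively phrase both statements as a single observation: for any nonzero scalar $c$, the operator $cP$ on a modified Rota-Baxter algebra of weight $\kappa$ is a modified Rota-Baxter operator of weight $c^2 \kappa$, and then deduce parts~(\ref{it:w1tok1}) and~(\ref{it:w1tok2}) by taking $c = \kappa$ with $\kappa = 1$, respectively $c = \kappa^{-1}$ with weight $\kappa^2$.
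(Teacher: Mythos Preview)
Your proposal is correct and takes essentially the same approach as the paper: both proofs proceed by direct rescaling, multiplying the weight-$1$ identity through by $\kappa^2$ for part~(\ref{it:w1tok1}) and reversing this when $\kappa$ is invertible for part~(\ref{it:w1tok2}). Your version is slightly more explicit in writing out $Q(u)Q(v)$, $Q(uQ(v))$, $Q(Q(u)v)$ separately, and your closing remark about general $c$ is a nice unifying observation, but there is no substantive difference.
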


\begin{proof}
(\mref{it:w1tok1}) From
\begin{equation}
P(u)P(v)=P(uP(v))+P(P(u)v)+  uv  \  \text{for all } u, v\in R, \mlabel{eq:mrb1}
\end{equation}
we obtain
\begin{equation}
(\kappa P)(u)(\kappa P)(v)=(\kappa P)(u (\kappa P)(v))+ (\kappa P)((\kappa P)(u)v) + \kappa^2 uv  \  \text{for all } u, v\in R,  \mlabel{eq:mrb2}
\end{equation}
whence $\kappa P$ a modified Rota-Baxter operator of weight $\kappa^2$.

(\mref{it:w1tok2}) If $\kappa$ is invertible, then Eq.~(\mref{eq:mrb2}) also implies
Eq.~(\mref{eq:mrb1}).
\end{proof}

\section{Free commutative modified Rota-Baxter algebras of weight $\kappa$}
\mlabel{sec:free}

We construct the free objects in the category of commutative modified Rota-Baxter algebras. The module structure is the same as the one for free commutative Rota-Baxter algebras as constructed in~\mcite{GK1,GK2}. We define the multiplication of the free objects first by an recursion in Section~\mref{ss:gen}. Then a non-recursive formula is given in Section~\mref{ss:spmo} by a variation of the stuffle product. We end the section by a special case.

\subsection{The general construction}
\mlabel{ss:gen}

We begin with the definition.

\begin{defn}
Let $A$ be a given commutative $\bfk$-algebra. A {\bf free commutative modified Rota-Baxter algebra} on $A$ is a commutative modified Rota-Baxter $\bfk$-algebra $F_{MRB}(A)$ together with an algebra homomorphism $j_A: A\to F_{MRB}(A)$ that satisfies the following universal property: for any free commutative modified Rota-Baxter $k$-algebra $(R,P)$ of weight $\kappa$ and algebra homomorphism $f: A\to R$, there is a unique modified Rota-Baxter $k$-algebra homomorphism $\free {f}:(F_{MRB}(A), P_\mo)\to (R, P)$ such that $f=\free {f}\circ j_A$.
\mlabel{de:freemrba}
\end{defn}

We first give the underlying module of the free commutative modified Rota-Baxter algebra over $A$.
Denote
\begin{equation}
\sha_\mo (A):= \bigoplus_{k\geq 1} A^{\ot k}= A \oplus (A\ot A) \oplus \cdots.
\mlabel{eq:modrb}
\end{equation}

We next equip this $\bfk$-module with a multiplication $\diamond_\mo$, called the {\bf modified quasi-shuffle product}.\footnote{As is well-known, the quasi-shuffle product coincides with the stuffle product~\mcite{Gub}. We will show in Section~\mref{ss:spmo}, the modified quasi-shuffle product can also be interpreted by a modified stuffle product.} It is given by a recursion.
For pure tensors $\fraka= a_0 \ot  a_1 \ot \cdots \ot  a_m\in A^{\ot (m+1)}, \frakb= b_0 \ot  b_1 \ot \cdots \ot  b_n\in A^{\ot (n+1)}, m, n\geq 0,$ write $\fraka=a_0\ot \fraka'$ with $\fraka'\in A^{\ot m}$ if $m\geq 1$ and $\frakb=b_0\ot \frakb'$ with $\frakb'\in A^{\ot n}$ if $n\geq 1$. The recursion is given on the sum $m+n\geq 0$ by
\begin{equation}
\fraka\diamond_\mo \frakb:=\left\{\begin{array}{ll}
 a_0  b_0, & \text{for  }m=0,n=0;\\
 a_0  b_0 \ot  a_1  \ot \cdots \ot  a_m, &
\text{for  }m\geq 1,n=0;\\
 a_0  b_0 \ot  b_1  \ot \cdots \ot  b_n, & \text{for  }m=0,n\geq 1;\\
 a_0  b_0 \ot \Big((1 \ot \fraka')\diamond_\mo  \frakb'\Big) + a_0  b_0 \ot \Big(\fraka'\diamond_\mo (1 \ot \frakb')\Big) & \\
\quad +\kappa a_0   b_0 (\fraka'\diamond_\mo \frakb'), & \text{for  }m\geq 1,n\geq 1.\end{array}\right .
\mlabel{eq:mrbprod}
\end{equation}
Here in the last case, for each pair of pure tensors involved in the multiplication $\diamond_\mo$ in the three terms, either one of the pure tensors has length one or the sums of the lengths of the pair of pure tensors is less then $m+n$. Hence the recursion terminates and gives a well-defined binary operation on $\sha_\mo(A)$ after extending by biadditivity.

\begin{theorem}Let a commutative algebra $A$ and $\kappa\in \bfk$ be given.
\begin{enumerate}
\item The pair $(\sha_\mo(A),\diamond_\mo)$ is a commutative $\bfk$-algebra.
    \mlabel{it:alg}
\item Define
    \begin{equation}
    P_A : \sha_\mo(A)\to \sha_\mo(A), \quad \fraka \mapsto 1_A\ot \fraka \quad \text{ for all } \fraka\in \sha_\mo(A).
    \mlabel{eq:op}
    \end{equation}
Then the triple $(\sha_\mo(A), \diamond_\mo,P_A )$ together with the embedding $j_A:A\to \sha_\mo(A)$ is the free commutative modified Rota-Baxter $k$-algebra of weight $\kappa$.
\mlabel{it:comfree}
\end{enumerate}
\mlabel{thm:comfree}
\end{theorem}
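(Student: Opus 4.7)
Part~(\mref{it:alg}) will be proved by induction on total tensor-degree. Commutativity $\fraka\diamond_\mo \frakb=\frakb\diamond_\mo\fraka$ is straightforward on $m+n$: the three base cases use $a_0b_0=b_0a_0$ in $A$, and in the generic case $m,n\geq 1$ the three terms on the right-hand side of~(\mref{eq:mrbprod}) become symmetric once the induction hypothesis is invoked on the strictly shorter pairs $((1\ot\fraka'),\frakb')$, $(\fraka',(1\ot\frakb'))$ and $(\fraka',\frakb')$. Associativity $(\fraka\diamond_\mo\frakb)\diamond_\mo\frakc=\fraka\diamond_\mo(\frakb\diamond_\mo\frakc)$ requires induction on $m+n+p$ with a case split according to which of $m,n,p$ vanish. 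The cases in which at least one of them is zero collapse directly to the first three clauses of~(\mref{eq:mrbprod}); the generic case $m,n,p\geq 1$ is the technical crux, since each side must be expanded twice by the last clause of~(\mref{eq:mrbprod}), producing a collection of terms classified by the distribution of the $1\ot$-insertions and the $\kappa$-contractions among the three arguments, which must then be matched one-to-one via the inductive hypothesis.

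For part~(\mref{it:comfree}), I first verify that $P_A$ is a modified Rota-Baxter operator of weight $\kappa$. Given $\fraka,\frakb\in\sha_\mo(A)$, the last clause of~(\mref{eq:mrbprod}) applied with $a_0=b_0=1_A$ reads
\begin{equation*}
(1_A\ot\fraka)\diamond_\mo(1_A\ot\frakb)=1_A\ot\bigl((1_A\ot\fraka)\diamond_\mo\frakb\bigr)+1_A\ot\bigl(\fraka\diamond_\mo(1_A\ot\frakb)\bigr)+\kappa\,\fraka\diamond_\mo\frakb,
\end{equation*}
which is exactly Eq.~(\mref{eq:mrbo}) for $P_A$. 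That $j_A$ is an algebra homomorphism is the $m=n=0$ clause of~(\mref{eq:mrbprod}).

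For the universal property, uniqueness of $\free f:\sha_\mo(A)\to R$ follows from the identity $a_0\ot\fraka'=j_A(a_0)\diamond_\mo P_A(\fraka')$ (a consequence of the $m=0$, $n\geq 1$ clause of~(\mref{eq:mrbprod})), which forces any modified Rota-Baxter algebra morphism $\free f$ lifting $f$ to satisfy
\begin{equation*}
\free f(a_0\ot a_1\ot\cdots\ot a_k)=f(a_0)\,P\bigl(\free f(a_1\ot\cdots\ot a_k)\bigr).
\end{equation*}
Existence is obtained by taking this as the definition of $\free f$ on pure tensors (with base case $\free f|_A=f$) and extending $\bfk$-linearly. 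The intertwining $\free f\circ P_A=P\circ\free f$ is then automatic, using $f(1_A)=1_R$. Multiplicativity $\free f(\fraka\diamond_\mo\frakb)=\free f(\fraka)\,\free f(\frakb)$ is proved by induction on $m+n$; the decisive step is the case $m,n\geq 1$, where one computes $\free f(\fraka)\,\free f(\frakb)=f(a_0)\,P(\free f(\fraka'))\cdot f(b_0)\,P(\free f(\frakb'))$ and applies the modified Rota-Baxter identity of weight $\kappa$ for $P$ in $R$ to the subproduct $P(\free f(\fraka'))\,P(\free f(\frakb'))$. This produces three terms which, by the inductive hypothesis, match term by term the images under $\free f$ of the three terms on the right-hand side of~(\mref{eq:mrbprod}). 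This matching is exactly what the shape of the recursion was designed to effect; together with the associativity case in part~(\mref{it:alg}), it constitutes the main obstacle of the proof.
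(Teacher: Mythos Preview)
Your proposal is correct and follows essentially the same approach as the paper: commutativity by symmetry, associativity by induction on $m+n+p$ with a double expansion in the generic case (the paper carries this out explicitly and exhibits a permutation in $\Sigma_9$ matching the nine resulting terms), the modified Rota-Baxter identity for $P_A$ read off directly from the last clause of the recursion with $a_0=b_0=1_A$, and the universal property via the forced recursive definition of $\free f$ together with an inductive multiplicativity argument driven by the modified Rota-Baxter identity in $R$. The only cosmetic difference is that the paper verifies multiplicativity starting from $\free f(\fraka\diamond_\mo\frakb)$ and expanding outward, whereas you start from $\free f(\fraka)\,\free f(\frakb)$; the two computations are mirror images of each other.
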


\begin{proof}
(\mref{it:alg} )
The commutativity of $\diamond_\mo$ can be seen from the symmetry of the two arguments in the definition of $\diamond_\mo $ in Eq.~(\mref{eq:mrbprod}).

To prove the associativity of $\diamond_\mo$, we only need to verify
\begin{equation}
(\fraka \diamond_\mo \frakb)\diamond_\mo \frakc = \fraka \diamond_\mo (\frakb \diamond_\mo \frakc)
\mlabel{eq:asso}
\end{equation}
for pure tensors $\fraka\in A^{\ot (m+1)}, \frakb\in A^{\ot (n+1)}, \frakc\in A^{\ot (\ell+1)}, m, n, \ell\geq 0$. For this purpose we apply the induction on the sum $m+n+\ell$, which is at least zero.
The initial case of $m+n+\ell=0$ follows from the first case of the definition of $\diamond_\mo$ in Eq.~(\mref{eq:mrbprod}) and the associativity of the multiplication of $A$. Assume that Eq.~(\mref{eq:asso}) has been verified when $m+n+\ell=k$ for a given $k\geq 0$ and consider the case when $m+n+\ell=k+1$.

If at least one of $m, n$ or $\ell$ is zero, then from the second and third cases in the definition of $\diamond_\mo $ in Eq.~(\mref{eq:mrbprod}), we easily obtain
$$(\fraka\diamond_\mo \frakb)\diamond_\mo c=\fraka\diamond_\mo (\frakb\diamond_\mo c).$$

Now for $m\geq 1, n\geq 1, \ell\geq 1$, denote $\fraka= a_0 \ot \fraka', \frakb= b_0 \ot \frakb', c=c_0 \ot \frakc',$ where $a_0, b_0, c_0\in A, \fraka'\in A^{\ot m}, \frakb'\in A^{\ot n}, \frakc'\in A^{\ot \ell}$. For the left hand side of Eq.~(\mref{eq:asso}), we have
 \begin{eqnarray*}
 &&(\fraka\diamond_\mo \frakb)\diamond_\mo c\\
 &=&(( a_0 \ot \fraka')\diamond_\mo ( b_0 \ot \frakb'))\diamond_\mo (c_0 \ot\frakc')\\
 &=&\Big( a_0  b_0 \ot \big((1 \ot \fraka')\diamond_\mo  \frakb'\big) + a_0  b_0 \ot \big(\fraka'\diamond_\mo (1 \ot \frakb')\big) +\kappa  a_0   b_0 (\fraka'\diamond_\mo \frakb')\Big)\diamond_\mo (c_0 \ot\frakc')\\
 &=&\Big( a_0  b_0 \ot \big((1 \ot \fraka')\diamond_\mo  \frakb'\big)\Big)\diamond_\mo (c_0 \ot\frakc')+\Big( a_0  b_0 \ot \big(\fraka'\diamond_\mo (1 \ot \frakb')\big)\Big)\diamond_\mo (c_0 \ot\frakc')\\
 &&+\Big(\kappa  a_0   b_0 (\fraka'\diamond_\mo \frakb')\Big)\diamond_\mo (c_0 \ot\frakc') \\
  &=&( a_0  b_0 c_0) \ot \Big( 1\ot \big((1 \ot \fraka')\diamond_\mo  \frakb'\big)\diamond_\mo\frakc'\Big)
  +( a_0  b_0 c_0) \ot \Big(  \big((1 \ot \fraka')\diamond_\mo  \frakb'\big)\diamond_\mo (1\ot\frakc')\Big) \\
 &&+\kappa ( a_0  b_0 c_0)  \Big(\big((1 \ot \fraka')\diamond_\mo  \frakb'\big)\diamond_\mo\frakc'\Big)
+( a_0  b_0 c_0) \ot \Big( 1\ot \big( \fraka'\diamond_\mo ( 1\ot \frakb')\big)\diamond_\mo\frakc'\Big)\\
&&+( a_0  b_0 c_0) \ot \Big(  \big( \fraka'\diamond_\mo (1\ot  \frakb')\big)\diamond_\mo (1\ot\frakc')\Big)+\kappa ( a_0  b_0 c_0)  \Big(\big( \fraka'\diamond_\mo (1\ot  \frakb')\big)\diamond_\mo\frakc'\Big)\\
&&+\kappa ( a_0   b_0 c_0) \Big((\fraka'\diamond_\mo \frakb')\diamond_\mo (1 \ot\frakc')\Big).
\end{eqnarray*}
For the fifth term on the right hand side of the last equation, by the induction hypothesis we obtain
\begin{eqnarray*}
\lefteqn{ \big( \fraka'\diamond_\mo (1\ot  \frakb')\big)\diamond_\mo (1\ot\frakc')
= \fraka'\diamond_\mo \big((1\ot  \frakb')\diamond_\mo (1\ot\frakc')\big)}\\
&=&  \fraka'\diamond_\mo \Big(1\ot \big((1\ot  \frakb')\diamond_\mo\frakc'\big) \Big)
+ \fraka'\diamond_\mo \Big(1 \ot \big(\frakb'\diamond_\mo (1 \ot\frakc')\big)\Big) + \kappa\fraka'\diamond_\mo \big((\frakb'\diamond_\mo\frakc')\big).
\end{eqnarray*}
Therefore, the left hand side of Eq.~(\mref{eq:asso}) expands to
\begin{eqnarray*}
&&(\fraka\diamond_\mo \frakb)\diamond_\mo c\\
&=&( a_0  b_0 c_0) \ot \Big( 1\ot \big((1 \ot \fraka')\diamond_\mo  \frakb'\big)\diamond_\mo\frakc'\Big)+( a_0  b_0 c_0) \ot \Big(  \big((1 \ot \fraka')\diamond_\mo  \frakb'\big)\diamond_\mo (1\ot\frakc')\Big)\\
&&+\kappa ( a_0  b_0 c_0) \Big( \big((1 \ot \fraka')\diamond_\mo  \frakb'\big)\diamond_\mo\frakc'\Big)
+( a_0  b_0 c_0) \ot \Big( 1\ot \big( \fraka'\diamond_\mo ( 1\ot \frakb')\big)\diamond_\mo\frakc'\Big)\\
&&+( a_0  b_0 c_0) \ot \Big( \fraka'\diamond_\mo \big(1\ot \big((1\ot  \frakb')\diamond_\mo\frakc'\big)\big)\Big)
+( a_0  b_0 c_0) \ot \Big(  \fraka'\diamond_\mo \big(1 \ot (\frakb'\diamond_\mo (1 \ot\frakc'))\big)\Big) \\
&&+\kappa( a_0  b_0 c_0) \ot   \big( \fraka'\diamond_\mo (\frakb'\diamond_\mo\frakc')\big)
+\kappa ( a_0  b_0 c_0)  \Big(\big( \fraka'\diamond_\mo (1\ot  \frakb')\big)\diamond_\mo\frakc'\Big)\\
&&
+\kappa ( a_0   b_0 c_0) \Big((\fraka'\diamond_\mo \frakb')\diamond_\mo (1 \ot\frakc')\Big).
\end{eqnarray*}
For the right hand side of Eq.~(\mref{eq:asso}), we have
\begin{eqnarray*}
\lefteqn{ \fraka\diamond_\mo (\frakb\diamond_\mo c)=( a_0 \ot \fraka')\diamond_\mo \big(( b_0 \ot \frakb')\diamond_\mo (c_0 \ot\frakc')\big) }\\
&=&( a_0 \ot  \fraka')\diamond_\mo \Big( b_0 c_0 \ot \big((1 \ot \frakb')\diamond_\mo\frakc'\big) + b_0 c_0 \ot \big(\frakb'\diamond_\mo (1 \ot\frakc')\big) +\kappa  b_0  c_0 (\frakb'\diamond_\mo\frakc')\Big) \\
&=&( a_0 \ot  \fraka')\diamond_\mo \Big( b_0 c_0 \ot \big((1 \ot \frakb')\diamond_\mo\frakc'\big)\Big)
+( a_0 \ot  \fraka')\diamond_\mo \Big( b_0 c_0 \ot \big(\frakb'\diamond_\mo (1 \ot\frakc')\big)\Big)\\
&&+( a_0 \ot  \fraka')\diamond_\mo \Big(\kappa  b_0  c_0 (\frakb'\diamond_\mo \frakb')\Big)\\
&=&( a_0  b_0 c_0) \ot \Big((1 \ot \fraka')\diamond_\mo \big((1\ot  \frakb')\diamond_\mo\frakc'\big)\Big)+( a_0  b_0 c_0) \ot \Big(  \fraka'\diamond_\mo \big(1\ot \big((1\ot \frakb')\diamond_\mo\frakc'\big)\big)\Big)\\
&&+\kappa ( a_0  b_0 c_0)  \Big( \fraka'\diamond_\mo \big((1\ot \frakb')\diamond_\mo\frakc'\big)\Big)
+( a_0  b_0 c_0) \ot \Big( (1\ot  \fraka')\diamond_\mo \big( \frakb'\diamond_\mo (1\ot\frakc')\big)\Big)\\
&&+( a_0  b_0 c_0) \ot \Big( \fraka'\diamond_\mo \big(1\ot \big( \frakb'\diamond_\mo (1\ot\frakc')\big)\big)\Big)
+\kappa ( a_0  b_0 c_0)  \Big(\fraka'\diamond_\mo \big( \frakb'\diamond_\mo (1\ot\frakc')\big)\Big)\\
&&+\kappa ( a_0  b_0  c_0)\Big((1 \ot \fraka')\diamond_\mo (\frakb'\diamond_\mo\frakc')\Big).
\end{eqnarray*}
For the first term on the right hand side of the last equation, by the inductive hypothesis we obtain
\begin{eqnarray*}
\lefteqn{   (1\ot \fraka')\diamond_\mo \big((1\ot  \frakb')\diamond_\mo\frakc'\big) = \big( (1\ot \fraka')\diamond_\mo (1\ot  \frakb') \big)\diamond_\mo\frakc'}\\
&=& \big( 1\ot \big((1\ot \fraka')\diamond_\mo  \frakb'\big)\big)\diamond_\mo\frakc' +1 \ot \big(\fraka'\diamond_\mo (1 \ot \frakb')\big)\big)\diamond_\mo\frakc' + \kappa(\fraka'\diamond_\mo \frakb')\diamond_\mo\frakc'.
\end{eqnarray*}
Therefore the right hand side of Eq.~(\mref{eq:asso}) expands to \begin{eqnarray*}
&&\fraka\diamond_\mo (\frakb\diamond_\mo c)\\
&=&( a_0  b_0 c_0) \ot \Big(\big( 1\ot \big((1\ot \fraka')\diamond_\mo  \frakb'\big)\big)\diamond_\mo\frakc'\Big)
+( a_0  b_0 c_0) \ot \Big( \big(\fraka'\diamond_\mo (1 \ot \frakb')\big)\diamond_\mo\frakc'\Big) \\
&& + \kappa( a_0  b_0 c_0) \ot \Big((\fraka'\diamond_\mo \frakb')\diamond_\mo\frakc'\Big)
+( a_0  b_0 c_0) \ot \Big(  \fraka'\diamond_\mo \big(1\ot \big((1\ot \frakb')\diamond_\mo\frakc'\big))\Big)\\
&&+\kappa ( a_0  b_0 c_0)  \Big( \fraka'\diamond_\mo \big((1\ot \frakb')\diamond_\mo\frakc'\big)\Big)
+( a_0  b_0 c_0) \ot \Big( (1\ot  \fraka')\diamond_\mo \big( \frakb'\diamond_\mo (1\ot\frakc')\big)\Big)\\
&& +( a_0  b_0 c_0) \ot \Big( \fraka'\diamond_\mo \big(1\ot \big( \frakb'\diamond_\mo (1\ot\frakc')\big)\big)\Big)
+\kappa ( a_0  b_0 c_0)  \Big(\fraka'\diamond_\mo \big( \frakb'\diamond_\mo (1\ot\frakc')\big)\Big)\\
&&+\kappa  (a_0  b_0  c_0)\Big((1 \ot \fraka')\diamond_\mo (\frakb'\diamond_\mo\frakc')\Big).
\end{eqnarray*}
Now we see that the $i$-th term in the expansion of $(\fraka\diamond_\mo \frakb)\diamond_\mo \frakc $ matches with the $\sigma(i)$-th term in the expansion of $\fraka\diamond_\mo (\frakb\diamond_\mo \frakc)$, where $\sigma \in \Sigma_{9}$ is
$$\left (\begin{array}{ll}
\quad i \\
\sigma (i)
\end{array}  \right ) =\left (\begin{array}{ll}
1\quad 2\quad 3\quad 4\quad 5\quad 6\quad 7\quad 8\quad 9  \\
1\quad 6\quad 9\quad 2\quad 4\quad 7\quad 3\quad 5\quad 8
\end{array}  \right ). $$  Thus
$(\fraka\diamond_\mo \frakb)\diamond_\mo \frakc=\fraka\diamond_\mo (\frakb\diamond_\mo \frakc) .$
This completes the inductive proof of the associativity of $\diamond_\mo$.

\smallskip

\noindent
(\mref{it:comfree})
First Eq.~(\ref{eq:mrbprod}) gives
\begin{eqnarray*}
\lefteqn{P_A (\fraka)\diamond_\mo P_A (\frakb)=(1 \ot \fraka)\diamond_\mo (1 \ot \frakb)}\\
&=&1 \ot \Big((1 \ot \fraka)\diamond_\mo  \frakb\Big) + 1 \ot \Big(\fraka\diamond_\mo (1 \ot \frakb)\Big) + \kappa  (\fraka\diamond_\mo \frakb)\\
&=&P_A (P_A (\fraka) \diamond_\mo \frakb) + P_A (\fraka \diamond_\mo P_A (\frakb))+ \kappa  \fraka \diamond_\mo \frakb.
\end{eqnarray*}
Thus $P_A $ is a modified Rota-Baxter operator on  $\sha_\mo (A)$, giving a modified Rota-Baxter algebra $(\sha_\mo(A),P_A)$.

To verify the universal property of the free commutative modified Rota-Baxter algebra, let $(R,P)$ be a modified Rota-Baxter algebra and $f: A \rightarrow R$ be a homomorphism of modified Rota-Baxter algebras. We consider the existence and the uniqueness of the modified Rota-Baxter algebra homomorphism $\free {f}: \sha_\mo^{+} (A) \rightarrow R$ such that $\free{f}\,j_A=f$.

For any pure tensor $\fraka= a_0 \ot  a_1 \ot \cdots \ot  a_k \in A^{\ot (k+1)}, $
we apply the induction on $k$ to define $\free{f}$. When $k=0, A=j_A(A)$ and we define $\free {f}(\fraka)=f(\fraka).$ Note that this the only way to define $\free{f}$ in this case.
Let $n\geq 0$ and assume that $\free {f}(\fraka)$ has been defined for $k\leq n$. Consider $\fraka \in  A^{\ot (n+1)}$. Then $\fraka= a_0 \ot \fraka'$ with $\fraka' \in  A^{\ot n} $ and we have $\fraka= a_0 \diamond_\mo P_A (\fraka')$. Define
$$\free {f}(\fraka):=f( a_0) P(\free {f}(\fraka')),$$
where $\free{f}(\fraka')$ is defined by the inductive hypothesis. Note again that this is the only way to define $\free{f}$ in order for it to be a modified Rota-Baxter algebra homomorphism.

We now show that $ \free {f}$ is a modified Rota-Baxter algebra homomorphism. First,
$$ \free{f}(P_A(\fraka))=\free{f}(1\ot \fraka)=P(\free{f}(\fraka)) \quad \text{for all } \fraka\in A^{\ot (n+1)} $$
by the definition of $\free{f}$. Thus we only need to verify
$$ \free {f}(\fraka\diamond_\mo \frakb)=\free {f}(\fraka)  \free {f}(\frakb) \quad \text{for all } \fraka\in A^{\ot (m+1)}, \frakb\in A^{\ot (n+1)}, m, n\geq 0.$$
We carry out the verification by induction on $m+n\geq 0$. In the initial step of $m+n=0$, we have $\fraka, \frakb\in A$. So
$$\free{f}(\fraka \diamond_\mo \frakb)=\free{f}(\fraka \frakb) = f(\fraka \frakb) =f(\fraka)f(\frakb)=\free{f}(\fraka)\free{f}(\frakb),$$
as needed.

For the inductive step, the verification is still easy if $m=0$ or $n=0$ from the definition of the product $\diamond_\mo$. When $m, n\geq 1$, we have
\begin{eqnarray*}
&&\free {f}(\fraka\diamond_\mo \frakb)=\free {f}(( a_0 \ot \fraka')\diamond_\mo ( b_0 \ot \frakb'))\\
&=&\free {f}(( a_0 \diamond_\mo  P_A (\fraka'))\diamond_\mo ( b_0 \diamond_\mo P_A (\frakb')))\\
&=&\free {f} \Big(( a_0  b_0) \diamond_\mo (P_A (\fraka')\diamond_\mo  P_A (\frakb')) \Big)\\
&=&\free {f} \Big(( a_0  b_0) \diamond_\mo \big(P_A (\fraka'\diamond_\mo  P_A (\frakb'))+P_A (P_A (\fraka')\diamond_\mo \frakb')+ \kappa \fraka' \diamond_\mo \frakb'\big)\Big)\\
&=&\free {f}( a_0  b_0) \Big( (\free {f}P_A )(\fraka'\diamond_\mo  P_A (\frakb'))+(\free {f}P_A )(P_A (\fraka')\diamond_\mo \frakb')+ \kappa \free{f} (\fraka' \diamond_\mo \frakb')\Big)\\
&=&f( a_0  b_0) \Big( (P \free {f})(\fraka'\diamond_\mo  (1 \ot \frakb'))+(P \free {f})((1 \ot \fraka')\diamond_\mo \frakb')+ \kappa \free{f}( \fraka')  \free{f}(\frakb')\Big)\\
&=&f( a_0  b_0) \Big(P\big(\free{f}(\fraka') \free {f}(1 \ot \frakb')\big)+P\big(\free {f}(1 \ot \fraka') \free{f}(\frakb')\big)+ \kappa \free{f}( \fraka')  \free{f}(\frakb')\Big)\\
&=&f( a_0  b_0) \Big(P\big(\free{f}(\fraka') P(\free {f} (\frakb'))\big)+P\big(P(\free {f}(\fraka')) \free{f}(\frakb')\big)+ \kappa \free{f}( \fraka')  \free{f}(\frakb')\Big)\\
&=&f( a_0) f( b_0) P(\free {f}(\fraka')) P(\free {f}( \frakb'))\\
&=&\free {f}(\fraka) \free {f}(\frakb).
\end{eqnarray*}
Therefore $\free{f}$ is a homomorphism of modified Rota-Baxter algebras.

With the uniqueness of $\free{f}$ noted above, the proof of the theorem is completed.
\end{proof}

\subsection{Modified stuffle product and modified quasi-shuffle product}
\mlabel{ss:spmo}

In this subsection, we give an explicit formula for the modified quasi-shuffle product $\diamond_\mo$ defined in Eq.~(\mref{eq:mrbprod}) by a ``modified" version of the stuffle product.

Let $k\geq 1$. Denote $[k]=\{1, \cdots, k\}$.
Let $m\geq 1, n\geq 1$ and $0\leq r\leq \min(m, n)$. Define
\begin{align*}
\frakJ_{m,\,n,\,r} := \left\{ (\varphi, \psi) \,\left| \,
\begin{array}{l}
\varphi:[m] \to [m+n-r], \psi: [n]\to [m+n-r]\\
\text{are order preserving injective maps such that }\\
\im(\varphi)\cup\im(\psi) = [m+n-r]
\end{array}
 \right.
\right\}
\end{align*}
Write
\begin{equation*}
\frakJ_{m,\,n}:= \bigcup_{r=0}^{\min(m,n)} \frakJ_{m,\,n,\,r}.
\end{equation*}
For $(\varphi, \psi)\in \frakJ_{m,\,n,\,r}$, $k\in [m]$ and $\ell\in [n]$, define the {\bf $\varphi$-overlapping degree} of $(\varphi,\psi)$ at $k$ by
$$ d_{\varphi,\,k}:=d_{(\varphi,\,\psi),\,\varphi,\,k}=|\{ i\in [k] \,|\, \varphi(i)\in \im \psi\}|=|\varphi([k])\cap \psi([n])|$$
 and the {\bf $\psi$-overlapping degree} of $(\varphi,\psi)$ at $\ell$ by
$$ d_{\psi,\,\ell}:=d_{(\varphi,\,\psi),\,\psi,\,\ell}=|\{ j\in [\ell] \,|\, \psi(j)\in \im \varphi\}|=|\varphi([m])\cap \psi([\ell])|.$$
We then define
\begin{equation}
\begin{aligned}
\tilde{\varphi}:&[m] \to \{0\}\cup[m+n-2r], \quad \tilde{\varphi}(k):= \varphi(k)-d_{\varphi,k}, \quad k\in [m],\\
\tilde{\psi}:&[n] \to \{0\}\cup[m+n-2r], \quad \tilde{\psi}(\ell):= \psi(\ell)-d_{\psi,\ell}, \quad \ell\in [n].
\end{aligned}
\mlabel{eq:tvp}
\end{equation}
Let $\fraka = a_0 \ot a_1\ot \cdots \ot a_m \in A^{\ot (m+1)}$ and $\frakb = b_0 \ot b_1\ot \cdots \ot b_n\in A^{\ot (n+1)}, m, n\geq 1$.
We define
\begin{equation}
\fraka \epro_{(\varphi,\, \psi)} \frakb:= a_0b_0c_0\ot \cdots \ot c_{m+n-2r},
\quad \text{where }
c_s:=\prod_{i\in \tilde{\varphi}^{-1}(s)}a_i \prod_{j\in\tilde{\psi}^{-1}(s)}b_j,\quad s\in \{0\}\cup [m+n-2r],
\label{eq:exmod}
\end{equation}
with the convention that the product over an empty set is 1.
We give an example before continuing.

\begin{exam}
Take $m=5$, $n=4$ and $r=3$. Consider order preserving injective maps
$\varphi:[5]\to [6]$ and $\psi:[4]\to [6]$ defined by
$$ \begin{tabular}{|c|c|c|c|c|c|}
\hline
$k$ &1&2&3&4&5\\ \hline $\varphi(k)$& 1&2&4&5&6 \\\hline\end{tabular} \quad \text{ and }\quad
\begin{tabular}{|c|c|c|c|c|} \hline $\ell$&1&2&3&4\\ \hline $\psi(\ell)$ &1&3&4&5\\\hline\end{tabular}$$
Then the overlapping degrees and maps $\tilde{\varphi}, \tilde{\psi}$ are given by the following tables, where we reproduce the rows of $\varphi$ and $\psi$ so that each of the fourth rows is simply the difference of the second and the third ones.
$$ \begin{tabular}{|c|c|c|c|c|c|}
\hline
$k$ &1&2&3&4&5\\ \hline
$\varphi(k)$& 1&2&4&5&6 \\ \hline
$d_{\varphi,k}$&1&1&2&3&3\\ \hline
$\tilde{\varphi}(k)$& 0&1&2&2&3 \\\hline\end{tabular} \quad \text{ and }\quad
\begin{tabular}{|c|c|c|c|c|}
\hline $\ell$&1&2&3&4\\ \hline
$\psi(\ell)$ &1&3&4&5\\\hline
$d_{\psi,\ell}$& 1&1&2&3 \\ \hline
$\tilde{\psi}(\ell)$&0&2&2&2\\ \hline
\end{tabular}$$
Thus for $\fraka = a_0\ot a_1 \ot \cdots \ot a_5\in A^{\ot 6}$ and $\frakb = b_0\ot b_1\ot \cdots \ot b_4\in A^{\ot 5}$, Eq.~(\ref{eq:exmod}) gives
$$\fraka \epro_{(\varphi,\, \psi)} \frakb = a_0b_0a_1b_1 \ot a_2\ot b_2a_3b_3a_4b_4 \ot a_5.$$
\end{exam}

Back to the general case, define
\begin{equation}
\fraka \epr \frakb := \left\{\begin{array}{ll}
a_0 \frakb, & m=0, \\ b_0\fraka, & n=0, \\
\sum\limits_{r=0}^{\min(m,\,n)} \kappa^r \left( \sum_{(\varphi,\,\psi)\in \frakJ_{m,\,n,\,r}} \fraka\epro_{(\varphi,\,\psi)} \frakb \right),
& m\geq 1, n\geq 1.
\end{array} \right.
\mlabel{eq:epr}
\end{equation}
Extending by linearity, $\epr$ is defined on $\sha_\mo(A)$.

\begin{exam} When $m=1$ and $n=2$, we have $\fraka=a_0\ot a_1$ and $\frakb = b_0\ot b_1\ot b_2$.
Then $(\varphi, \psi)\in \frakJ_{1,\,2,\,r}, 0\leq r\leq 1,$ and the corresponding $\fraka\epro_{(\varphi,\,\psi)} \frakb$ are given in the following table.
$$\begin{tabular}{|c|c|c|c|c|c|c|}
\hline
r& $m+n-2r$ & m+n-r&$\varphi$ & $\psi$& $\fraka\epro_{(\varphi,\,\psi)} \frakb$\\
\hline
0& 3&3 &$\varphi(1)=1$ & $ \psi(1)=2, \psi(2)=3$ & $a_0b_0\ot a_1\ot b_1\ot b_2$\\
\hline
0&3&3 &$\varphi(1)=2$ & $\psi(1)=1, \psi(2)=3$ & $a_0b_0\ot b_1\ot a_1\ot b_2$\\
\hline
0&3&3 &$\varphi(1)=3$ & $\psi(1)=1, \psi(2)=2$ & $a_0b_0\ot b_1\ot b_2\ot a_1$\\
\hline
1&1&2 &$\varphi(1)=1$ & $\psi(1)=1, \psi(2)=2$ & $\kappa a_0b_0a_1b_1\ot b_2$\\
\hline
1&1&2 &$\varphi(1)=2$ & $\psi(1)=1, \psi(2)=2$ & $\kappa a_0b_0\ot b_1a_1b_2$\\
\hline
\end{tabular}
$$
So we obtain
\begin{align*}
\fraka \epr \frakb =& a_0b_0\ot a_1\ot b_1\ot b_2+  a_0b_0\ot b_1\ot a_1\ot b_2 +
a_0b_0\ot b_1\ot b_2\ot a_1 \\
&+ \kappa a_0b_0a_1b_1\ot b_2 + \kappa a_0b_0\ot b_1a_1b_2.
\end{align*}
\end{exam}

The next result shows that $\epr$ coincides with $\diamond_\mo$.

\begin{prop}
The modified stuffle product $\epr$ coincides with the modified quasi-shuffle product $\diamond_\mo$.
\mlabel{pp:twom}
\end{prop}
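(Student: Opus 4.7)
The plan is to prove $\fraka \epr \frakb = \fraka \dm \frakb$ by induction on $m+n \geq 0$, reducing to pure tensors $\fraka = a_0 \ot \fraka' \in A^{\ot(m+1)}$, $\frakb = b_0 \ot \frakb' \in A^{\ot(n+1)}$ by bilinearity. The base cases $m = 0$ or $n = 0$ are immediate from matching the first three cases of the recursion~\eqref{eq:mrbprod} with the first two cases of~\eqref{eq:epr}. For the inductive step with $m, n \geq 1$, it suffices to show that $\epr$ satisfies the very same recursion as $\dm$, namely
\[
\fraka \epr \frakb = a_0 b_0 \ot \bigl((1 \ot \fraka') \epr \frakb'\bigr) + a_0 b_0 \ot \bigl(\fraka' \epr (1 \ot \frakb')\bigr) + \kappa\, a_0 b_0 \,(\fraka' \epr \frakb');
\]
the inductive hypothesis then replaces each $\epr$ on the right by $\dm$, recovering~\eqref{eq:mrbprod} for $\fraka \dm \frakb$.

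The core step is a three-way partition of $\frakJ_{m,n,r}$ according to the preimages of $1 \in [m+n-r]$. Since $\im\varphi \cup \im\psi = [m+n-r]$, exactly one of the following holds for $(\varphi, \psi)$: \emph{Case A}, $\varphi(1) = 1$ with $1 \notin \im\psi$; \emph{Case B}, $\psi(1) = 1$ with $1 \notin \im\varphi$; \emph{Case C}, $\varphi(1) = \psi(1) = 1$ (an overlap at position $1$, forcing $r \geq 1$). Stripping the first position yields natural bijections: Case A $\leftrightarrow \frakJ_{m-1,n,r}$ via $(\varphi,\psi) \mapsto (\phi,\chi)$ with $\phi(i) = \varphi(i+1) - 1$, $\chi(j) = \psi(j) - 1$; Case B $\leftrightarrow \frakJ_{m,n-1,r}$ symmetrically, $\phi(i) = \varphi(i) - 1$, $\chi(j) = \psi(j+1) - 1$; and Case C $\leftrightarrow \frakJ_{m-1,n-1,r-1}$ by stripping both, $\phi(i) = \varphi(i+1) - 1$, $\chi(j) = \psi(j+1) - 1$.

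I would then verify by direct computation of the tilde maps that in each case the output $\fraka \epro_{(\varphi,\psi)} \frakb$ matches the corresponding recursion term. The key identity, readily checked from~\eqref{eq:tvp}, is $\tilde\varphi(i) = \#\{q \in [\varphi(i)] : q \notin \im\varphi \cap \im\psi\}$, so each slot $s$ collects exactly one non-overlap position together with any immediately following overlap positions. In Case A this gives $c_0 = a_0 b_0$, $c_1 = a_1 d_0$, and $c_{t+1} = d_t$ for $t \geq 1$, where $d_0, d_1, \ldots$ are the slots of $\fraka' \epro_{(\phi,\chi)} (1 \ot \frakb')$ (the leading ``$1$'' of $1 \ot \frakb'$ multiplying harmlessly into $a_1 d_0$); summing over Case A then yields $a_0 b_0 \ot (\fraka' \epr (1 \ot \frakb'))$. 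Case B is symmetric. In Case C the position-$1$ overlap is absorbed into slot $0$, giving $c_0 = a_0 b_0 a_1 b_1 e_0$ and $c_s = e_s$ for $s \geq 1$ where the $e_*$ are the slots of $\fraka' \epro_{(\phi,\chi)} \frakb'$; the factor $\kappa^r = \kappa \cdot \kappa^{r-1}$ then produces $\kappa\, a_0 b_0 (\fraka' \epr \frakb')$. The main technical obstacle is the bookkeeping around overlaps absorbed into slot $1$ in Cases A and B: a non-overlap at position $1$ can be followed by a run of overlaps, and these must match the absorptions into the leading slot of the smaller stuffle product on the other side. The slot identity above makes this translation routine, reducing each bijection to a straightforward identity between tilde maps after a shift of indices.
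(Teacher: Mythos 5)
Your proposal is correct and follows essentially the same route as the paper: both verify that $\epr$ satisfies the boundary conditions and the recursion in Eq.~(\ref{eq:mrbprod}) defining $\diamond_\mo$, using the identical three-way partition of $\frakJ_{m,n}$ according to whether position $1$ lies in $\im\varphi$ only, $\im\psi$ only, or both, and the same bijections with $\frakJ_{m-1,n,r}$, $\frakJ_{m,n-1,r}$ and $\frakJ_{m-1,n-1,r-1}$. Your treatment is if anything slightly more careful, since you track the general weight $\kappa$ (via $\kappa^r=\kappa\cdot\kappa^{r-1}$ in the overlap case) and make the slot-matching identity for $\tilde\varphi,\tilde\psi$ explicit, whereas the paper sets $\kappa=1$ "for simplicity."
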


\begin{proof}
For simplicity, we take the weight $\kappa$ to be $1$.
We show that $\epr$ satisfies the same boundary conditions and recursion that defines $\diamond_\mo$ in Eq.~(\mref{eq:mrbprod}).
Let $\fraka = a_0\ot a_1\ot \cdots \ot a_m \in A^{\ot (m+1)}$ and $\frakb = b_0 \ot b_1\ot \cdots \ot b_n\in A^{\ot (n+1)}$ with $m,n\geq 0$. If $m=0$ or $n=0$, then by Eqs.~(\mref{eq:mrbprod}) and (\mref{eq:epr}), we have
$$\fraka \epr \frakb =   \fraka\diamond_\mo \frakb.$$
Thus $\epr$ and $\diamond_\mo$ satisfy the same boundary conditions.
If $m\geq 1$ and $n\geq 1$, write
\begin{align*}
\frakJ'_{m,\,n}:=& \{(\varphi,\,\psi) \in \frakJ_{m,\,n} \,|\, \varphi(1) = 1, \psi(1)\neq 1\},\\
\frakJ''_{m,\,n}:=& \{(\varphi,\,\psi) \in \frakJ_{m,\,n} \,|\, \varphi(1) \neq 1, \psi(1) = 1\},\\
\frakJ'''_{m,\,n}:=& \{(\varphi,\,\psi) \in \frakJ_{m,\,n} \,|\, \varphi(1) = 1 = \psi(1)\}.
\end{align*}
Then
$$\frakJ_{m,\,n} = \frakJ'_{m,\,n}\sqcup \frakJ''_{m,\,n}\sqcup \frakJ'''_{m,\,n}.$$
Therefore we get
\begin{align*}
\fraka \epr \frakb =& \sum_{(\varphi,\,\psi)\in \frakJ_{m,\,n}} \fraka \epro_{(\varphi,\,\psi)} \frakb\\
=&\sum_{(\varphi,\,\psi)\in \frakJ'_{m,\,n}} \fraka \epro_{(\varphi,\,\psi)} \frakb + \sum_{(\varphi,\,\psi)\in \frakJ''_{m,\,n}} \fraka \epro_{(\varphi,\,\psi)} \frakb + \sum_{(\varphi,\,\psi)\in \frakJ'''_{m,\,n}} \fraka \epro_{(\varphi,\,\psi)} \frakb\\
=& \sum_{(\varphi',\,\psi)\in \frakJ_{m-1,\,n}} a_0b_0 \ot (\fraka'\epro_{(\varphi',\,\psi)} (1\ot \frakb'))
+ \sum_{(\varphi,\,\psi')\in \frakJ_{m,\,n-1}} a_0b_0 \ot ((1\ot \fraka') \epro_{(\varphi,\,\psi')} \frakb')\\
&+ \sum_{(\varphi',\,\psi')\in \frakJ_{m-1,\,n-1}} a_0b_0 (\fraka' \epro_{(\varphi',\,\psi')} \frakb')\\
=& a_0b_0 \ot (\fraka' \epr (1\ot\frakb')) + a_0b_0 \ot ( (1\ot\fraka') \epr \frakb') + a_0b_0(\fraka' \epr \frakb'),
\end{align*}
where
\begin{align*}
\varphi':=\, \varphi|_{[m]\setminus\{1\}},\, \psi':= \psi|_{[n]\setminus\{1\}},\,\fraka' := a_1\ot \cdots \ot a_m\,\text{ and }\,\frakb' := b_1\ot \cdots \ot b_n.
\end{align*}
Hence $\epr$ and $\diamond_\mo$ satisfy the same recursion. This completes the proof.
\end{proof}

\subsection{The case of $A=\bfk$}
\mlabel{ss:case}
Here we consider a special case of Theorem~\mref{thm:comfree} which provides new examples of modified Rota-Baxter algebras.

Choosing $A=\bfk$ in the construction of free commutative modified Rota-Baxter algebra $\sha_\mo(A)$ in Eq.~(\mref{eq:modrb}),
we have
$$\sha_\mo(\bfk)=\bigoplus_{n\geq 0} {\bfk}^{\ot (n+1)}.$$
Since the tensor product is over $\bfk$, we get $\bfk^{ \ot(n+1)}=\bfk u_n$, where
$$ u_n={\bf 1}^{ \ot(n+1)}=\underbrace{{\bf 1} \ot {\bf 1} \ot \cdots \ot  {\bf 1}}_{\text {(n+1)-factors}}\,\text{ for }\,  n\geq 0.$$
Thus $$\sha_\mo(\bfk)=\bfk \{ u_n\,| \,n\geq 0\}.$$
Then Eq.~(\mref{eq:mrbprod}) gives
\begin{equation*}
\begin{aligned}
 u_0\diamond_\mo  u_n=& u_n,\,  u_1\diamond_\mo  u_1=2  u_2+\kappa  u_0, \\
 u_1\diamond_\mo  u_2=& 3  u_3+2 \kappa  u_1, \, u_2\diamond_\mo  u_2=6 u_4+6\kappa  u_2+\kappa^2  u_0.
 \end{aligned}
\end{equation*}
In general, the multiplication formula of $\sha_\mo(\bfk)$ is given by

\begin{prop} For any $m,n\geq 0$,
$$ u_m\diamond_\mo  u_n=\sum\limits_{r=0}^{\min\{ m, n\}}
{m+n-r\choose m} {m\choose r}\,
\kappa ^r  u_{m+n-2r}.$$
\mlabel{pp:mult}
\end{prop}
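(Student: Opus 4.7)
The plan is to deduce the formula from the modified stuffle description of Section~\mref{ss:spmo}. By Proposition~\mref{pp:twom} we may compute $u_m \diamond_\mo u_n$ using Eq.~(\mref{eq:epr}): we sum, over $0\le r \le \min(m,n)$ with weight $\kappa^r$, the terms $\fraka \epro_{(\varphi,\psi)} \frakb$ with $(\varphi,\psi)$ ranging over $\frakJ_{m,n,r}$, where $\fraka = u_m$ and $\frakb = u_n$.

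Since we are specialized to $A=\bfk$, every tensor factor $a_i$ and $b_j$ equals $\bfone\in\bfk$, and every product in $\bfk$ returns $\bfone$. Therefore, for every $(\varphi,\psi) \in \frakJ_{m,n,r}$ the element $\fraka \epro_{(\varphi,\psi)} \frakb$ defined in Eq.~(\mref{eq:exmod}) is a pure tensor with $m+n-2r+1$ factors, each equal to $\bfone$, so it is simply $u_{m+n-2r}$. Thus
$$u_m \diamond_\mo u_n = \sum_{r=0}^{\min(m,n)} \kappa^r \, |\frakJ_{m,n,r}| \, u_{m+n-2r},$$
and the proposition reduces to the combinatorial identity $|\frakJ_{m,n,r}| = \binom{m+n-r}{m}\binom{m}{r}$.

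To establish this identity, observe that an order-preserving injection $[k] \hookrightarrow [N]$ is uniquely determined by its image. Hence $\frakJ_{m,n,r}$ is in bijection with pairs of subsets $(I, J)$ of $[m+n-r]$ with $|I|=m$, $|J|=n$, and $I \cup J = [m+n-r]$; inclusion-exclusion then forces $|I\cap J| = r$. I would choose $I$ in $\binom{m+n-r}{m}$ ways and then choose the overlap $I\cap J$ as a subset of $I$ in $\binom{m}{r}$ ways; the remaining $n-r$ elements of $J$ are then forced to exhaust the complement of $I$ in $[m+n-r]$. This yields $|\frakJ_{m,n,r}| = \binom{m+n-r}{m}\binom{m}{r}$, completing the argument.

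The whole proof is mechanical once Proposition~\mref{pp:twom} is in hand, and there is no genuine obstacle beyond the bookkeeping in the count; the only point needing a brief check is the implication $I\cup J = [m+n-r] \Rightarrow |I\cap J|=r$, which is immediate from $|I|+|J|-|I\cap J|=|I\cup J|$. An alternative route by direct induction on $m+n$ using the recursion~(\mref{eq:mrbprod}) would require verifying a Pascal-type identity relating $\binom{m+n-r}{m}\binom{m}{r}$ at triples $(m,n,r)$, $(m-1,n,r)$ and $(m,n-1,r)$, $(m-1,n-1,r-1)$, which is possible but less transparent; I would not pursue it.
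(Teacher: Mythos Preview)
Your proof is correct and takes a genuinely different route from the paper's. The paper proceeds by induction on $m+n$ directly from the recursion in Eq.~(\mref{eq:mrbprod}), expanding $u_m\diamond_\mo u_n$ into three terms involving $(m,n-1)$, $(m-1,n)$ and $(m-1,n-1)$, applying the induction hypothesis, and then collapsing the resulting sums via two applications of Pascal's rule (on the $\binom{m}{r}$ factor first, then on the $\binom{m+n-r}{m}$ factor). This is exactly the ``alternative route'' you describe in your final paragraph and chose not to pursue.

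Your approach instead leverages Proposition~\mref{pp:twom} to replace the recursion by the closed stuffle formula of Eq.~(\mref{eq:epr}), after which the problem becomes the purely combinatorial count $|\frakJ_{m,n,r}|=\binom{m+n-r}{m}\binom{m}{r}$, which you handle cleanly via the image bijection and inclusion--exclusion. This is more conceptual and explains \emph{why} the particular product of binomials appears: it is literally the number of stuffle patterns with $r$ overlaps. The paper's inductive proof, by contrast, is self-contained (it does not rely on Section~\mref{ss:spmo}) and verifies the formula without ever interpreting the coefficients, at the cost of some binomial manipulation. One small remark: Eq.~(\mref{eq:epr}) and the sets $\frakJ_{m,n,r}$ are stated for $m,n\geq 1$, so strictly you should dispose of the boundary cases $m=0$ or $n=0$ separately; this is of course immediate from $u_0\diamond_\mo u_n=u_n$.
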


\begin{proof}
We proceed by induction on $m+n\geq 0$.
When $m = 0$ or $n = 0$, without loss of generality, let $m = 0$. Then the result follows from  $u_0 \diamond_\mo u_n = u_n$.
When $m,n\geq 1 $, we may assume $m \leq n$ by the commutativity of $\diamond_\mo$. Then
\begin{equation}
\begin{aligned}
 u_m\diamond_\mo  u_n=&{\bf 1} \ot ( u_m\diamond_\mo  u_{n-1})+{\bf 1} \ot ( u_{m-1}\diamond_\mo  u_n)+ \kappa  u_{m-1}\diamond_\mo  u_{n-1}\\
=&\sum\limits_{r=0}^{\min(m,n-1)}
\binc{m+n-1-r}{m} \binc{m}{r}
\kappa ^r  u_{m+n-2r}+\sum\limits_{r=0}^{ m-1}
\binc{m+n-1-r}{m-1} \binc{m-1}{r}\,
\kappa ^r u_{m+n-2r}\\
&+ \sum\limits_{r=0}^{m-1}
\binc{m+n-2-r}{ m-1} \binc{m-1}{ r}\,
\kappa ^{r+1} u_{m+n-2-2r} \quad \quad \text {(by the induction hypothesis).}
\end{aligned}
\label{eq:one1}
\end{equation}
If $m=n$, then $\min(m,n-1) = n-1 = m-1$ and so
\begin{equation}
\sum\limits_{r=0}^{\min(m,\,n-1)}
\binc{m+n-1-r}{m} \binc{m}{r}
\kappa ^r  u_{m+n-2r} = \sum\limits_{r=0}^{m-1}
\binc{m+n-1-r}{m} \binc{m}{r}
\kappa ^r  u_{m+n-2r}
=
\sum\limits_{r=0}^{m}
\binc{m+n-1-r}{m} \binc{m}{r}
\kappa ^r  u_{m+n-2r},
\mlabel{eq:one2}
\end{equation}
where the last step follows from the convention
$$\binc{m+n-1-m}{m} = \binc{n-1}{m} = \binc{m-1}{m} = 0.$$
If $m<n$, then $\min(m,n-1) = m$ and Eq.~(\mref{eq:one2}) holds.

Substitute Eq.~(\mref{eq:one2}) into Eq.~(\mref{eq:one1}) and shift the index in the last summand in Eq.~(\mref{eq:one1}), we get
\begin{align*}
 u_m\diamond_\mo  u_n=&\sum\limits_{r=0}^{m}
\binc{m+n-1-r}{m} \binc{m}{r}
\kappa ^r  u_{m+n-2r}+\sum\limits_{r=0}^{ m-1}
\binc{m+n-1-r}{ m-1} \binc{m-1}{ r}\,
\kappa ^r u_{m+n-2r}\\
&+ \sum\limits_{r=1}^{m}
\binc{m+n-1-r}{ m-1} \binc{m-1}{ r-1}\,
\kappa ^{r} u_{m+n-2r} \\
=&\sum\limits_{r=0}^m
\binc{m+n-1-r}{m} \binc{m}{r}
\kappa ^r  u_{m+n-2r}+\sum\limits_{r=0}^{ m}
\binc{m+n-1-r}{ m-1} \binc{m-1}{ r}\,
\kappa ^r u_{m+n-2r}\\
&+ \sum\limits_{r=0}^{m}
\binc{m+n-1-r}{ m-1} \binc{m-1}{ r-1}\,
\kappa ^{r} u_{m+n-2r} \quad \quad \text {(with the convention $\binc{m-1}{ m} = 0 = \binc{m-1}{ -1}$)}\\
=&\sum\limits_{r=0}^m
\binc{m+n-1-r}{m} \binc{m}{r}
\kappa ^r  u_{m+n-2r}+\sum\limits_{r=0}^{ m}
\binc{m+n-1-r}{ m-1} \left( \binc{m-1}{ r} +  \binc{m-1}{ r-1} \right)\,
\kappa ^r u_{m+n-2r}\\
=&\sum\limits_{r=0}^m
\binc{m+n-1-r}{m} \binc{m}{r}
\kappa ^r  u_{m+n-2r}+\sum\limits_{r=0}^{ m}
\binc{m+n-1-r}{ m-1} \binc{m}{ r}\,
\kappa ^r u_{m+n-2r}\\
=&\sum\limits_{r=0}^m
\left(\binc{m+n-1-r}{m}+\binc{m+n-1-r}{m-1}\right)  \binc{m}{r}
\kappa ^r  u_{m+n-2r}\\
=&\sum\limits_{r=0}^m
\binc{m+n-r}{ m} \binc{m}{ r}\,
\kappa ^r u_{m+n-2r},
\end{align*}
as required.
\end{proof}

\section{The Hopf algebra structure on free commutative modified Rota-Baxter algebras}
\mlabel{sec:hopf}

We now prove that a free modified Rota-Baxter algebra $\sha_\mo(A)$ constructed in Theorem~\mref{thm:comfree} carries a natural Hopf algebra structure, as long as $A$ is already a connected bialgebra (and hence a Hopf algebra) and the weight of the modified Rota-Baxter algebra is a negative square. This applies in particular to free commutative modified Rota-Baxter algebra on a set $X$, which corresponds to the case of $A=\bfk[X]$. We divide the construction into two steps, first obtaining the structure of a bialgebra in Section~\mref{ss:bialg} and then that of a Hopf algebra by a filtration argument in Section~\mref{ss:hopf}. In Section~\mref{ss:case2}, the coproduct is made explicit in the special case when $A=\bfk$.

\subsection{The bialgebra structure}
\mlabel{ss:bialg}

Let $(A,\Delta,\vep,u)$ be a bialgebra.
For distinction, we will use $\barot$ for the tensor symbol for the coproduct $\Delta: A\to A\barot A$.

Fix $\lambda\in \bfk$ and consider the free modified Rota-Baxter algebra $\sha_\mo(A)=\sha_{\mo,-\lambda^2}(A)$ of weight $\kappa=-\lambda^2$ on $A$. To obtain a bialgebra on $\sha_\mo(A)$, we define a coproduct
\begin{equation}
\Delta_\mo: \sha_\mo(A)\to \sha_\mo(A) \oot \sha_\mo(A)
\end{equation}
by making use of the coproduct $\Delta$ on $A$ together with the operator $P_A $, so that a suitable ``cocycle condition" is satisfied.
More precisely, we define $\Delta_\mo(\fraka)$ for $\fraka\in A^{\ot k}, k\geq 1,$ inductively on $k$. When $k=1$, that is, $\fraka\in A$, simply define
 $$\Delta_\mo(\fraka):=\Delta(\fraka).$$
For $k\geq 2$ and $\fraka\in A^{\ot k}$, write $\fraka=a_0\ot \fraka'$ with $\fraka'\in A^{\ot k}$. Note that $\fraka= a_0 \diamond_\mo P_A  (\fraka').$ Then we apply a Hochschild cocycle and define
\begin{equation}
\Delta_\mo(P_A  (\fraka'))
:=P_A  (\fraka')\barot 1+(\id\barot P_A  )\Delta_\mo(\fraka')+ \lambda \fraka'\barot 1,
\mlabel{eq:cocyc}
\end{equation}
and
\begin{equation}
\Delta_\mo(\fraka)=\Delta_\mo( a_0 \diamond_\mo P_A  (\fraka')):=\Delta ( a_0)\diamond_\mo \Delta_\mo(P_A  (\fraka')).
\mlabel{eq:coprod}
\end{equation}
From this definition and the multipicativity of $\Delta$ we obtain
$$ \Delta_\mo(a\dm \fraka)=\Delta(a) \dm \Delta_\mo(\fraka) \quad \text{for all } a\in A, \fraka \in A^{\ot k}.$$
Further we define a counit
\begin{equation}
\vep_\mo: \sha_\mo(A)\to \bfk, \quad \vep_\mo(\fraka):=(-\lambda)^k\vep( a_0 a_1\cdots a_k)\quad \text{if } \fraka= a_0\ot a_1\ot\cdots\ot a_k\in A^{\ot (k+1)},
\mlabel{eq:counit}
\end{equation}
for the counit $\vep:A\to \bfk$ of the bialgebra $A$. Then the definitions of $\vep_\mo$ and $P_A$ lead to
\begin{equation}
\vep_\mo (1\ot \fraka')=\vep_\mo(P_A(\fraka')) =-\lambda \vep_\mo(\fraka'), \quad \vep_\mo(a_0\ot \fraka')=-\lambda \vep(a_0)\vep_\mo(\fraka') \quad \text{for all } \fraka=a_0\ot \fraka'.
\mlabel{eq:veprec}
\end{equation}

\begin{lemma}
Let  $P_A$ and $\Delta_\mo$ be as above. Then for $\alpha\in A\barot A$, we have
$$ (\id\barot \Delta_\mo )(\id\barot P_A  )(\alpha)=((\id \barot  P_A  )(\alpha))\barot 1
+(\id\barot \id\barot P_A)(\id \barot \Delta_\mo)(\alpha) +\lambda \alpha \barot 1,$$
as an element in $A\barot A\barot A$.
\mlabel{lem:coc2}
\end{lemma}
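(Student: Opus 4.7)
The approach is to reduce the identity to the defining cocycle condition Eq.~\mref{eq:cocyc}, exploiting the fact that every operator appearing on both sides acts as the identity on the first tensor slot. By $\bfk$-linearity it is enough to verify the identity for a single pure tensor $\alpha = a\barot b$ with $a,b\in A$.

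First I would note that the initial step of the recursive definition Eq.~\mref{eq:coprod} gives $\Delta_\mo|_A = \Delta$, so $(\id\barot \Delta_\mo)(a\barot b) = a\barot \Delta(b)$. Then I would expand the left-hand side at $\alpha = a\barot b$ as
$$(\id\barot \Delta_\mo)(\id\barot P_A)(a\barot b) = a\barot \Delta_\mo(P_A(b)),$$
and apply the cocycle formula Eq.~\mref{eq:cocyc} with $\fraka' = b$ to rewrite this as
$$a\barot P_A(b)\barot 1 \;+\; a\barot (\id\barot P_A)\Delta(b) \;+\; \lambda\, a\barot b\barot 1.$$

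Expanding the three terms on the right-hand side at $\alpha = a\barot b$ yields exactly the same three summands: $((\id\barot P_A)(\alpha))\barot 1 = a\barot P_A(b)\barot 1$, then $(\id\barot \id\barot P_A)(\id\barot \Delta_\mo)(\alpha) = a\barot (\id\barot P_A)\Delta(b)$, and finally $\lambda\alpha\barot 1 = \lambda\, a\barot b\barot 1$. A termwise comparison closes the argument. There is no substantive obstacle here: the lemma is a direct tensor-extension of the cocycle property Eq.~\mref{eq:cocyc}, and its role is to package that property in the form needed for the subsequent inductive arguments (for instance, the coassociativity of $\Delta_\mo$) rather than to contribute new structural content.
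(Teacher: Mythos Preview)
Your proof is correct and follows essentially the same route as the paper: reduce by linearity to a pure tensor, apply the cocycle identity Eq.~\eqref{eq:cocyc} in the second slot, and match the three resulting terms with those on the right-hand side. The only cosmetic difference is that the paper writes $\Delta_\mo(\alpha_1)$ in the middle term rather than $\Delta(b)$, but since $\alpha_1\in A$ these agree by definition.
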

\begin{proof}
We only need to verify the equation for a pure tensor $\alpha_0\barot \alpha_1\in A\barot A.$ For this we check
\begin{eqnarray*}
(\id\barot \Delta_\mo )(\id\barot P_A  )(\alpha_0\barot \alpha_1) &=&
\alpha_0 \barot (\Delta_\mo  P_A(\alpha_1))\\
&=& \alpha_0 \barot P_A(\alpha_1)\barot 1
+ \alpha_0 \barot \Big((\id \barot P_A)\Delta_\mo(\alpha_1)\Big)
+ \lambda \alpha_0\barot \alpha_1 \barot 1\\
&=& \big((\id\barot P_A)(\alpha)\big)\barot 1 + (\id \barot \id \barot P_A)(\id \barot \Delta_\mo)(\alpha) + \lambda \alpha \barot 1,
\end{eqnarray*}
as needed.
\end{proof}

\begin{lemma}
Let  $P_A $ be as above. Then
$ \id \barot P_A$ is a modified Rota-Baxter operator on the tensor product algebra $\sha_\mo(A) \oot \sha_\mo(A).$
\end{lemma}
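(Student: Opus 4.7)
The plan is to verify the modified Rota-Baxter identity of weight $\kappa=-\lambda^2$ directly on pure tensors. The key observation is that the multiplication on $\sha_\mo(A)\oot\sha_\mo(A)$ is componentwise,
$$(\fraka_1\oot \fraka_2)\cdot(\frakb_1\oot \frakb_2)=(\fraka_1\dm \frakb_1)\oot(\fraka_2\dm \frakb_2),$$
and that $\id\barot P_A$ acts nontrivially only on the second tensor factor. Since both sides of the modified Rota-Baxter identity are biadditive in $u$ and $v$, it is enough to verify it on pure tensors.

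For pure tensors $u=\fraka_1\oot \fraka_2$ and $v=\frakb_1\oot \frakb_2$, every term that appears in the identity carries $\fraka_1\dm \frakb_1$ as a common first tensor factor, and what remains in the second factor is precisely the modified Rota-Baxter identity
$$P_A(\fraka_2)\dm P_A(\frakb_2)=P_A(\fraka_2\dm P_A(\frakb_2))+P_A(P_A(\fraka_2)\dm \frakb_2)+\kappa\,(\fraka_2\dm \frakb_2)$$
for $P_A$ on $\sha_\mo(A)$, which holds by Theorem~\mref{thm:comfree}(\mref{it:comfree}). Matching term by term then yields the desired equation
$$(\id\barot P_A)(u)\cdot (\id\barot P_A)(v)=(\id\barot P_A)\big(u\cdot (\id\barot P_A)(v)\big)+(\id\barot P_A)\big((\id\barot P_A)(u)\cdot v\big)+\kappa\, u\cdot v.$$

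There is essentially no obstacle here; the lemma is an instance of the general fact that for any $\bfk$-algebras $R_1$ and $R_2$ and any modified Rota-Baxter operator $P$ of weight $\kappa$ on $R_2$, the operator $\id_{R_1}\ot P$ is a modified Rota-Baxter operator of the same weight on the tensor product algebra $R_1\ot R_2$. The only point requiring attention is to make the componentwise factorization of the multiplication on $\sha_\mo(A)\oot\sha_\mo(A)$ explicit, after which the identity immediately reduces to the one already established for $P_A$ on $\sha_\mo(A)$.
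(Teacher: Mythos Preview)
Your proof is correct and follows essentially the same approach as the paper, which simply asserts that the result ``follows directly from the definition of the modified Rota-Baxter operator.'' Your argument is just a more explicit unpacking of this claim: factor the tensor-product multiplication componentwise, observe that the first factor is common to all terms, and reduce to the modified Rota-Baxter identity for $P_A$ on $\sha_\mo(A)$ already established in Theorem~\mref{thm:comfree}(\mref{it:comfree}).
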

\begin{proof} It follows directly from the definition of  the modified Rota-Baxter operator.
\end{proof}

\begin{theorem}
If $A$ is a bialgebra, then $\sha_\mo(A)$ is a bialgebra.
\mlabel{thm:bialg}
\end{theorem}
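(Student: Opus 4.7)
The plan is to verify the four bialgebra axioms that are not immediate from the definitions: the counit property, coassociativity of $\Delta_\mo$, and multiplicativity of both $\vep_\mo$ and $\Delta_\mo$. The unit compatibilities $\Delta_\mo(1)=1\barot 1$ and $\vep_\mo(1)=1$ reduce to the bialgebra structure of $A$ via $\Delta_\mo|_A=\Delta$. In each case I proceed by induction on tensor degree, splitting a typical element as $\fraka=a_0\diamond_\mo P_A(\fraka')$, applying the Hochschild cocycle recursion~\eqref{eq:cocyc} together with~\eqref{eq:coprod}, and exploiting the two lemmas above, which are set up precisely to control the interaction between $\Delta_\mo$ and $P_A$.

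I would first dispose of the counit property and multiplicativity of $\vep_\mo$. For $(\vep_\mo\barot\id)\Delta_\mo=\id$, the base case $\fraka\in A$ is the counit law for $\Delta$; the inductive step on $P_A(\fraka')$ uses $\vep_\mo(P_A(\fraka'))=-\lambda\vep_\mo(\fraka')$ from~\eqref{eq:veprec}, so the correction term $\lambda\fraka'\barot 1$ in~\eqref{eq:cocyc} cancels against the $-\lambda$ produced by $\vep_\mo\circ P_A$. The multiplicativity $\vep_\mo(\fraka\diamond_\mo\frakb)=\vep_\mo(\fraka)\vep_\mo(\frakb)$ is an induction on $m+n$: the three summands in the recursion for $\diamond_\mo$ contribute $\lambda^2$, $\lambda^2$ and $\kappa$ respectively (times $\vep(a_0)\vep(b_0)\vep_\mo(\fraka')\vep_\mo(\frakb')$), summing to $\lambda^2$ since $\kappa=-\lambda^2$, which matches $\vep_\mo(\fraka)\vep_\mo(\frakb)=\lambda^2\vep(a_0)\vep(b_0)\vep_\mo(\fraka')\vep_\mo(\frakb')$.

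For coassociativity, the induction on degree reduces by~\eqref{eq:coprod} and the coassociativity of $\Delta$ on $A$ to the case $\fraka=P_A(\fraka')$. Writing $\Delta_\mo(\fraka')=\sum\fraka'_{(1)}\barot\fraka'_{(2)}$ in Sweedler notation, I expand both $(\Delta_\mo\barot\id)\Delta_\mo(P_A(\fraka'))$ and $(\id\barot\Delta_\mo)\Delta_\mo(P_A(\fraka'))$ via~\eqref{eq:cocyc}; the latter triggers $(\id\barot\Delta_\mo)(\id\barot P_A)$ applied to $\Delta_\mo(\fraka')$, which by Lemma~\ref{lem:coc2} splits into three terms. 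After applying $\id\barot\id\barot P_A$ to the inductive hypothesis $(\Delta_\mo\barot\id)\Delta_\mo(\fraka')=(\id\barot\Delta_\mo)\Delta_\mo(\fraka')$, the two three-tensor expressions match term by term.

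The main obstacle is multiplicativity of $\Delta_\mo$, proved by induction on $m+n$. The cases $m=0$ or $n=0$ follow directly from~\eqref{eq:coprod} and the multiplicativity of $\Delta$ on $A$. In the main case $\fraka=a_0\diamond_\mo P_A(\fraka')$ and $\frakb=b_0\diamond_\mo P_A(\frakb')$, the claim reduces by~\eqref{eq:coprod} to the identity $\Delta_\mo(P_A(\fraka')\diamond_\mo P_A(\frakb'))=\Delta_\mo(P_A(\fraka'))\diamond_\mo\Delta_\mo(P_A(\frakb'))$. I rewrite the left side using the modified Rota-Baxter identity~\eqref{eq:mrbo} for $P_A$ to obtain two $P_A$-images plus a $-\lambda^2$-correction, then apply~\eqref{eq:cocyc} to each $P_A$-image. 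On the right side I expand $\Delta_\mo(P_A(\fraka'))\diamond_\mo\Delta_\mo(P_A(\frakb'))$ by~\eqref{eq:cocyc} into nine summands and collapse them using that $\id\barot P_A$ is itself a modified Rota-Baxter operator on $\sha_\mo(A)\oot\sha_\mo(A)$; this produces precisely the three principal summands matching the left side, while the residual terms reduce to the inductive hypothesis. The delicate part is the bookkeeping: every $\lambda$-linear correction from the cocycle must pair against a $-\lambda^2$-term from the modified Rota-Baxter identity, and the two preceding lemmas have been arranged so that this cancellation goes through cleanly.
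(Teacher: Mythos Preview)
Your proposal is correct and follows essentially the same approach as the paper's proof: both verify the four bialgebra axioms by induction on tensor degree, reduce via~\eqref{eq:coprod} to the case $\fraka=P_A(\fraka')$, and exploit the cocycle recursion~\eqref{eq:cocyc} together with Lemma~\mref{lem:coc2} (for coassociativity) and the fact that $\id\barot P_A$ is a modified Rota-Baxter operator (for multiplicativity of $\Delta_\mo$). The only cosmetic differences are that the paper treats the $a_0=b_0=1$ case first and then deduces the general case, and that it expands the two sides of the multiplicativity identity into eleven terms each and matches them by an explicit permutation in $\Sigma_{11}$, whereas you describe the same computation more succinctly as a nine-term expansion collapsed by the modified Rota-Baxter identity for $\id\barot P_A$.
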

\begin{proof}
The proof is divided into the following four parts:
\begin{enumerate}
\item
The coproduct $\Delta_\mo:\sha_\mo(A)\to \sha_\mo(A)\oot \sha_\mo(A)$ is an algebra homomorphism;
\mlabel{it:coprodhom}
\item
The counit $\vep_\mo: \sha_\mo(A)\to \bfk$ is an algebra homomorphism;
\mlabel{it:counithom}
\item
The coproduct $\Delta_\mo$ is coassociative; and
\mlabel{it:coasso}
\item
The map $\vep_\mo$ satisfies the counicity property.
\mlabel{it:counit}
\end{enumerate}

\noindent
(\mref{it:coprodhom})
To prove that $\Delta_\mo: \sha_\mo(A)\to \sha_\mo(A)\oot \sha_\mo(A)$ is an algebra homomorphism, we verify the multiplicativity
\begin{equation}
 \Delta_\mo(\fraka \dm \frakb)= \Delta_\mo(\fraka) \dm \Delta_\mo(\frakb)
 \mlabel{eq:mult}
 \end{equation}
for any pure tensor $\fraka= a_0 \ot a_1 \ot \cdots \ot  a_m \in A^{\ot (m+1)} ,b= b_0 \ot  b_1 \ot \cdots \ot  b_n \in  A^{\ot (n+1)} .$
For this we apply the induction on $m+n\geq 0$. When $m=0$ and $n=0,$ this is simply the multiplicity of $\Delta$ on $A$.
For the inductive step, when $m+n\geq 1$ with either $m=0$ or $n=0$, Eq.~(\mref{eq:mult}) again follows the definition of $\dm$. When $m, n\geq 1$, write $\fraka=a_0\ot \fraka'$ and $\frakb=b_0\ot\frakb'$.

First assume that $a_0=b_0=1$. Then we have
\begin{eqnarray*}
\lefteqn{ \Delta_\mo(\fraka \dm \frakb)= \Delta_\mo(P_A(\fraka')\dm P_A(\frakb'))}\\
&=& \Delta_\mo\Big( P_A\big(\fraka'\dm P_A(\frakb')\big)\Big) + \Delta_\mo\Big(P_A\big(P_A(\fraka')\dm \frakb'\big)\Big)-\lambda^2 \Delta_\mo( \fraka'\dm \frakb')\quad \text{(by Eq.~(\mref{eq:mrbo}))}\\
&=& P_A\big(P_A(\fraka')\dm \frakb'\big)\oot 1 + (\id\oot P_A)\Delta_\mo\big(P_A(\fraka')\dm \frakb'\big) + \lambda \big(P_A(\fraka')\dm \frakb'\big)\oot 1 \\
&& + P_A\big(\fraka'\dm P_A(\frakb')\big)\oot 1 + (\id\oot P_A)\Delta_\mo\big(\fraka'\dm P_A(\frakb')\big) + \lambda \big(\fraka'\dm P_A(\frakb')\big)\oot 1 \\
&&-\lambda^2 \Delta_\mo( \fraka'\dm \frakb') \quad \text{(by Eq.~(\mref{eq:cocyc}))}\\
&=& P_A\big(P_A(\fraka')\dm \frakb'\big)\oot 1 + (\id\oot P_A)\Big(\Delta_\mo\big(P_A(\fraka')\big)\dm \Delta_\mo(\frakb')\Big)
+ \lambda \big(P_A(\fraka')\dm \frakb'\big)\oot 1 \\
&& + P_A\big(\fraka'\dm P_A(\frakb')\big)\oot 1 + (\id\oot P_A)\Big(\Delta_\mo(\fraka')\dm \Delta_\mo\big(P_A(\frakb')\big)\Big) + \lambda \big(\fraka'\dm P_A(\frakb')\big)\oot 1 \\
&&-\lambda^2 \Delta_\mo( \fraka')\dm \Delta_\mo(\frakb') \quad \text{(by the induction hypothesis)}\\
&=& P_A\big(P_A(\fraka')\dm \frakb'\big)\oot 1
+ (\id\oot P_A)\Big(\Big(P_A(\fraka')\oot 1
+(\id\ot P_A) \Delta_\mo(\fraka')
\lambda (\fraka'\oot 1)\Big)\dm \Delta_\mo(\frakb')\Big)\\
&& + \lambda \big(P_A(\fraka')\dm \frakb'\big)\oot 1
+ P_A\big(\fraka'\dm P_A(\frakb')\big)\oot 1 \\
&&+ (\id\oot P_A)\Big(\Delta_\mo(\fraka')\dm \Big(P_A(\frakb')\oot 1+ (\id\ot P_A) \Delta_\mo(\frakb')+\lambda \frakb'\oot 1\Big)\Big) + \lambda \big(\fraka'\dm P_A(\frakb')\big)\oot 1 \\
&&-\lambda^2 \Delta_\mo( \fraka')\dm \Delta_\mo(\frakb') \quad \text{(by Eq.~(\mref{eq:cocyc}))}\\
&=& P_A\big(P_A(\fraka')\dm \frakb'\big)\oot 1
+ (L_{P_A(\fraka')}\oot P_A)\Delta_\mo(\frakb')+ (\id\oot P_A)\Big((\id\ot P_A) (\Delta_\mo(\fraka'))\dm \Delta_\mo(\frakb')\Big) \\
&&+ \lambda (L_{\fraka'} \oot P_A) \Delta_\mo(\frakb')
+ \lambda \big(P_A(\fraka')\dm \frakb'\big)\oot 1
+ P_A\big(\fraka'\dm P_A(\frakb')\big)\oot 1 \\
&& + (L_{P_A(\frakb')} \oot P_A) (\Delta_\mo(\fraka')) +
(\id\oot P_A)\Big(\Delta_\mo(\fraka')\dm (\id\ot P_A) \Delta_\mo(\frakb')\Big)
+\lambda (L_{\frakb'}\oot P_A)(\fraka') \\
&&+ \lambda \big(\fraka'\dm P_A(\frakb')\big)\oot 1
-\lambda^2 \Delta_\mo( \fraka')\dm \Delta_\mo(\frakb') \quad \text{(write } L_u \text{ for multiplication by } u)
\end{eqnarray*}

On the other hand,
\begin{eqnarray*}
\lefteqn{\Delta_\mo(\fraka)\dm\Delta_\mo(\frakb) = \Delta_\mo(P_A(\fraka')\dm P_A(\frakb')) }\\
&=& \Big(P_A(\fraka')\oot 1 + (\id\oot P_A)\Delta_\mo(\fraka')+\lambda \fraka'\oot 1\Big)\dm \Big(P_A(\frakb')\oot 1 + (\id\oot P_A)\Delta_\mo(\frakb')+\lambda \frakb'\oot 1\Big)\\
&=& (P_A(\fraka')\dm P_A(\frakb'))\oot 1
+ (P_A(\fraka')\oot 1)\dm \big((\id\oot P_A)\Delta_\mo(\frakb')\big)
+\big(P_A(\fraka')\oot 1)\dm(\lambda \frakb'\oot 1) \\
&&+\big((\id\oot P_A)\Delta_\mo(\fraka')\big)\dm (P_A(\frakb')\oot 1) +\big((\id\oot P_A)\Delta_\mo(\fraka')\big)\dm \big((\id\oot P_A)\Delta_\mo(\frakb')\big) \\
&&+\big((\id\oot P_A)\Delta_\mo(\fraka')\big)\dm (\lambda \frakb' \oot 1) + (\lambda\fraka'\oot 1)\dm (P_A(\frakb')\oot 1)\\
&&+ (\lambda\fraka'\oot 1)\dm \big((\id\ot P_A)\Delta_\mo(\frakb')\big)
+\lambda^2\fraka'\frakb'\oot 1 \quad \text{(distribute)}\\
&=& P_A\big(P_A(\fraka')\dm \frakb'\big)\oot 1
+ P_A\big(\fraka' \dm P_A(\frakb')\big)\oot 1
-\lambda^2\fraka'\frakb'\oot 1
+ (P_A(\fraka')\oot 1)\dm \big((\id\oot P_A)\Delta_\mo(\frakb')\big)\\
&&+\big(P_A(\fraka')\oot 1)\dm(\lambda \frakb'\oot 1)
+\big((\id\oot P_A)\Delta_\mo(\fraka')\big)\dm (P_A(\frakb')\oot 1) \\ &&+\,\big((\id\oot P_A)\Delta_\mo(\fraka')\big)\dm \big((\id\oot P_A)\Delta_\mo(\frakb')\big)
+\big((\id\oot P_A)\Delta_\mo(\fraka')\big)\dm (\lambda \frakb' \oot 1) \\
&& + (\lambda\fraka'\oot 1)\dm (P_A(\frakb')\oot 1) + (\lambda\fraka'\oot 1)\dm \big((\id\ot P_A)\Delta_\mo(\frakb')\big)
+\lambda^2\fraka'\frakb'\oot 1 \quad \text{(by Eq.~(\mref{eq:mrbo}))}\\
&=& P_A\big(P_A(\fraka')\dm \frakb'\big)\oot 1
+ P_A\big(\fraka' \dm P_A(\frakb')\big)\oot 1
+ (L_{P_A(\fraka')}\oot P_A)\Delta_\mo (\frakb') \\
&&+ \lambda (P_A(\fraka')\dm \frakb')\oot 1
+ (L_{P_A(\frakb')}\oot P_A)\Delta_\mo (\fraka')
+ (\id\oot P_A)\Big((\id\ot P_A)\big(\Delta_\mo(\fraka')\big)\dm \Delta_\mo(\frakb')\Big) \\
&&
+ (\id\oot P_A)\Big(\Delta_\mo(\fraka')\dm\big((\id\ot P_A)\Delta_\mo(\frakb')\big)\Big)
-\lambda^2 \Delta_\mo(\fraka')\dm\Delta_\mo(\frakb')
+ \lambda(L_{\frakb'}\oot P_A)\Delta_\mo(\fraka') \\
&& + \lambda \big(\fraka'\dm P_A(\frakb')\big) \oot 1 +\lambda (L_{\fraka'}\oot P_A)\Delta_\mo(\frakb') \quad \text{(write } L_u \text{ for multiplication by } u).
\end{eqnarray*}

Now we see that the $i$-th term in the expansion of $ \Delta_\mo(\fraka)\diamond_\mo \Delta_\mo( \frakb) $ agrees with the $\sigma(i)$-th term in the expansion of $\Delta_\mo(\fraka \diamond_\mo \frakb)$, where $\sigma \in \Sigma_{11}$ is
$$\left (\begin{array}{ll}
\quad i \\
\sigma (i)
\end{array}  \right ) =\left (\begin{array}{ccccccccccc}
1&  2&  3&  4&  5&  6&  7&  8&  9 &  10&  11 \\
1&  3&  6&  11&  4&  2&  5&  7&  9&  10&  8
\end{array}  \right ). $$

Next, for general $\fraka=a_0\ot \fraka'$ and $\frakb=b_0\ot \frakb'$, by the case considered above, we have

\begin{eqnarray*}
\Delta_\mo(\fraka)\dm \Delta_\mo(\frakb)&=&
\Delta(a_0)\dm \Delta_\mo(1\ot \fraka')\dm \Delta(b_0)\dm \Delta_\mo(1\ot \frakb')\\
&=&
\Delta(a_0)\dm \Delta(b_0)\dm \Delta_\mo(P_A(\fraka')\dm\Delta_\mo(P_A(\frakb')) \\
&=&
\Delta(a_0b_0)\dm \Delta_\mo\big(P_A(\fraka')\dm P_A(\frakb')\big).
\end{eqnarray*}

On the other hand, by the definition of $\Delta_\mo$ and $\dm$, we have
\begin{eqnarray*}
&&\Delta_\mo(\fraka\dm \frakb)\\
&=&\Delta_\mo\Big((a_0b_0) \ot \big((1\ot\fraka')\dm \frakb'\big)\Big)
+\Delta_\mo \Big((a_0b_0)\ot\big(\fraka'\dm (1\ot \frakb')\big)\Big)\\
&&+\Delta_\mo \Big((a_0b_0)(\fraka'\dm \frakb')\Big) \quad \text{(by Eq.~(\mref{eq:mrbprod}))}\\
&=& \Delta(a_0b_0)\dm \Delta_\mo\Big(1\ot\big(\fraka'\dm (1\ot \frakb')\big)\Big)+\Delta(a_0b_0)\dm \Delta_\mo \Big(1\ot\big(\fraka'\dm (1\ot \frakb')\big)\Big)\\
&&+\Delta(a_0b_0)\dm\Delta_\mo \Big(\fraka'\dm \frakb'\Big).
\end{eqnarray*}
Since $P_A$ is a modified Rota-Baxter operator, this agrees with the expansion of $\Delta_\mo(\fraka)\dm \Delta_\mo(\frakb)$.

This completes the verification that $\Delta_\mo$ is an algebra homomorphism for the product $\diamond_\mo$.
\smallskip

\noindent
(\mref{it:counithom})
We also prove the multiplicativity of $\vep_\mo$:
$$\vep_\mo(\fraka \dm \frakb)=\vep_\mo(\fraka)\dm\vep_\mo(\frakb)$$
by induction on $m+n\geq 0$ for $\fraka\in A^{\ot (m+1)}, \frakb\in A^{\ot (n+1)}, m, n\geq 0$. The initial case of $m+n=0$ is simply the multiplicativity of the counit $\vep$ on $A$. Assume that the equation holds for  $m+n\geq k\geq 0$ and consider the case when $m+n=k+1$. If either $m=0$ or $n=0$, then the equation follows easily from the definitions of $\dm$ and $\vep_\mo$. So we consider the case of $m, n\geq 1$ and write $a= a_0 \ot  a_1 \ot \cdots \ot a_m= a_0\ot\fraka'$ and $\frakb= b_0 \ot  b_1 \ot \cdots \ot b_n= b_0\ot\frakb'.$
First assume $a_0=b_0=1$. Then
\begin{eqnarray*}
&&\vep_\mo \big( (1\ot \fraka')\dm (1\ot \frakb')\big)\\
&=&
\vep_\mo \big( P_A(\fraka')\dm P_A(\frakb')\big) \\
&=& \vep_\mo\big(P_A(P_A(\fraka')\dm \frakb')\big)
+\vep_\mo\big(P_A(\fraka'\dm P_A(\frakb')\big)
-\lambda^2 \vep_\mo(\fraka'\dm\frakb') \quad \text{(by Eq.~(\mref{eq:mrbo}))}\\
&=& -\lambda \vep_\mo\big(P_A(\fraka')\dm\frakb'\big)
-\lambda\vep_\mo\big(\fraka'\dm P_A(\frakb')\big)
-\lambda^2 \vep_\mo(\fraka'\dm\frakb')
\quad \text{(by Eq.~(\mref{eq:veprec}))}\\
&=&-\lambda \vep_\mo\big(P_A(\fraka')\big)\dm\vep_\mo\big(\frakb'\big)
-\lambda\vep_\mo\big(\fraka'\big)\dm \vep_\mo\big(P_A(\frakb')\big)
-\lambda^2 \vep_\mo(\fraka')\dm\vep_\mo(\frakb') \\
&&
\quad \text{(by the induction hypothesis)} \\
&=&\lambda^2 \vep_\mo(\fraka')\dm\vep_\mo(\frakb')
+\lambda^2\vep_\mo(\fraka')\dm \vep_\mo(\frakb')
-\lambda^2 \vep_\mo(\fraka')\dm\vep_\mo(\frakb')
\quad \text{(by Eq.~(\mref{eq:veprec}))} \\
&=& \lambda^2 \vep_\mo(\fraka')\dm\vep_\mo(\frakb')\\
&=& \vep_\mo (1\ot \fraka')\dm \vep_\mo(1\ot \frakb').
\end{eqnarray*}
In general,
\begin{eqnarray*}
&&\vep_\mo(\fraka \dm \frakb)\\
&=&
\vep_\mo\big((a_0b_0)\dm P_A(\fraka')\dm P_A(\frakb)\big) \quad \text{(by Eq.~(\mref{eq:mrbprod}))} \\
&=& \vep(a_0b_0)\dm \vep_\mo\big(P_A(\fraka')\dm P_A(\frakb')\big)
\quad \text{(by Eq.~(\mref{eq:veprec}))}\\
&=& \vep(a_0)\dm \vep(b_0)\dm \vep_\mo(1\ot \fraka')\dm \vep_\mo(1\ot \frakb') \quad \text{(by the previous case)}\\
&=& \vep_\mo(\fraka) \dm \vep_\mo(\frakb).
\end{eqnarray*}
This proves the multiplicativity of $\vep_\mo$.
\smallskip

\noindent
(\mref{it:coasso})
We will prove the coassociativity of $\Delta_\mo$ by induction on $k\geq 0$ for $\fraka\in A^{\ot (k+1)}.$
When $k=0,$ then  $\Delta_\mo(\fraka)=\Delta(\fraka) $ and we are done. For $k>0$, write $\fraka= a_0 \ot \fraka' \in A^{\ot (k+1)} $ with  $\fraka' \in A^{\ot k}.$ First taking $a_0=1$, then $\fraka=P_A(\fraka')$ and we have
\begin{eqnarray*}
&&(\id\barot\Delta_\mo)\Delta_\mo(\fraka)\\
&=& (\id\barot\Delta_\mo)\Delta_\mo\big(P_A(\fraka')\big)\\
&=& (\id\barot\Delta_\mo)\Big( P(\fraka')\oot 1+(\id\oot P_A)\Delta_\mo(\fraka') + \lambda \fraka'\oot 1\Big) \quad \text{(by Eq.~(\mref{eq:cocyc}))}\\
&=& P_A(\fraka')\oot 1\oot 1 + \big(\id \oot (\Delta_\mo P_A)\big)\Delta(\fraka') +\lambda \fraka'\oot 1\oot 1\\
&=& P_A(\fraka')\oot 1\oot 1 + \big((\id\oot P_A)\Delta_\mo(\fraka')\big)\oot 1 + (\id \oot \id \oot P_A)(\id \oot \Delta_\mo)\Delta_\mo(\fraka') \\
&&+\lambda \Delta_\mo(\fraka')\oot 1 +\lambda \fraka'\oot 1\oot 1 \quad \text{(by Lemma~\mref{lem:coc2})}\\
&=& P_A(\fraka')\oot 1\oot 1 + \big((\id\oot P_A)\Delta_\mo(\fraka')\big)\oot 1 + (\id \oot \id \oot P_A)(\Delta_\mo \oot \id)\Delta_\mo(\fraka') \\
&&+\lambda \Delta_\mo(\fraka')\oot 1 +\lambda \fraka'\oot 1\oot 1 \quad \text{(by the induction hypothesis)}.
\end{eqnarray*}

On the other hand,
\begin{eqnarray*}
&& (\Delta_\mo\oot \id)\Delta_\mo(\fraka)\\
&=& (\Delta_\mo\oot \id)\Delta_\mo(P_A(\fraka'))\\
&=& (\Delta_\mo \oot \id)\Big( P(\fraka')\oot 1+(\id\oot P_A)\Delta_\mo(\fraka') + \lambda \fraka'\oot 1\Big) \quad \text{(by Eq.~(\mref{eq:cocyc}))}\\
&=& (\Delta_\mo P_A(\fraka'))\oot 1 + (\Delta_\mo \oot P_A)\Delta_\mo(\fraka') + \lambda \Delta_\mo(\fraka')\oot 1\\
&=& P_A(\fraka')\oot 1\oot 1 + \big((\id\oot P_a)\Delta_\mo(\fraka')\big) \oot 1 +\lambda \fraka'\oot 1\oot 1
+ (\id\oot \id \oot P_A)(\Delta_\mo\oot \id)\Delta_\mo(\fraka') +\lambda \Delta(\fraka')\oot 1
\end{eqnarray*}
by Eq.~(\mref{eq:cocyc}).
This agrees with the expansion of $(\id\oot \Delta_\mo)\Delta_\mo(\fraka)$. For the general case of $\fraka=a_0\ot \fraka'=a_0\dm P_A(\fraka')$, we have
\begin{eqnarray*}
 (\Delta_\mo\oot \id)\Delta_\mo (a_0P_A(\fraka'))
&=& (\Delta_\mo\oot \id)\Big(\Delta(a_0)\dm \Delta_\mo(P_A(\fraka'))\Big)\\
&=& \Big((\Delta\oot \id)\Delta(a_0)\Big)\dm \Big((\Delta_\mo\oot \id)\Delta_\mo(P_A(\fraka'))\Big).
\end{eqnarray*}
By the coassociativity of $\Delta$ and the previous case of the proof, this agrees with
\begin{eqnarray*}
 (\id \oot \Delta_\mo)\Delta_\mo (a_0P_A(\fraka'))
&=& (\id \oot \Delta_\mo)\Big(\Delta(a_0)\dm \Delta_\mo(P_A(\fraka'))\Big)\\
&=& \Big((\id \oot \Delta)\Delta(a_0)\Big)\dm \Big((\id \oot \Delta_\mo)\Delta_\mo(P_A(\fraka'))\Big),
\end{eqnarray*}
as needed.
This completes the induction.
\smallskip

\noindent
(\mref{it:counit})
For the counicity property of $\vep_\mo$, we verify
$$
(\id\barot \vep_\mo )\Delta_\mo(\fraka)=a\barot 1, \quad
(\vep_\mo \barot \id )\Delta_\mo(\fraka)=1\barot a
$$
by induction on $m\geq 0$ for $\fraka\in A^{\ot(m+1)}.$

When $m=0,$   $\vep_\mo=\vep $, so we are done by the counicity property of $\vep$.

For $m>0$, $\fraka= a_0 \ot \fraka' \in A^{\ot (m+1)} $ with  $\fraka' \in A^{\ot m}.$ We have
\begin{eqnarray*}
\lefteqn{(\id\barot \vep_\mo )\Delta_\mo(\fraka)=(\id\barot \vep_\mo) \Delta_\mo ( a_0 \diamond_\mo P_A (\fraka')) }\\
&=&(\id\barot \vep_\mo) \Delta_\mo ( a_0) \diamond_\mo (\id\barot \vep_\mo)( \Delta_\mo(P_A (\fraka')) \\
&=&( a_0\barot 1) \diamond_\mo (\id\barot \vep_\mo) (P_A  (\fraka')\barot 1+(\id\barot P_A )\Delta_\mo(\fraka')+\lambda \fraka'\barot 1) \\
&=&( a_0\barot 1) \diamond_\mo  (P_A  (\fraka')\barot 1+(\id\barot \vep_\mo P_A )\Delta_\mo(\fraka')+(\id\barot \vep_\mo)(\lambda \fraka'\barot 1))  \\
&=&( a_0\barot 1) \diamond_\mo  (P_A  (\fraka')\barot 1-\lambda(\id\barot \vep_\mo)\Delta_\mo(\fraka')+(\id\barot \vep_\mo)(\lambda \fraka'\barot 1))  \\
&=&( a_0\barot 1) \diamond_\mo (P_A  (\fraka')\barot 1-\lambda \fraka'\barot 1+\lambda \fraka'\barot 1)=\fraka \barot 1.
\end{eqnarray*}
Further,
  \begin{eqnarray*}
\lefteqn{(\vep_\mo \barot \id )\Delta_\mo(\fraka)=(\vep_\mo \barot \id) \Delta_\mo ( a_0 \diamond_\mo P_A (\fraka')) }\\
&=&(\vep_\mo \barot \id) \Delta_\mo ( a_0) \diamond_\mo (\vep_\mo \barot \id) \Delta_\mo(P_A (\fraka')) \\
&=&(1\barot  a_0) \diamond_\mo (\vep_\mo \barot \id) (P_A  (\fraka')\barot 1+(\id\barot P_A )\Delta_\mo(\fraka')+\lambda \fraka'\barot 1) \\
&=&(1\barot  a_0) \diamond_\mo  (\vep_\mo P_A (\fraka')\barot 1+(\vep_\mo \barot  P_A )\Delta_\mo(\fraka')+\lambda \vep_\mo(\fraka')\barot 1)\\
&=&(1\barot  a_0) \diamond_\mo  (-\lambda \vep_\mo(\fraka')\barot 1+(\id \barot  P_A )(\vep_\mo \barot \id)\Delta_\mo(\fraka')+\lambda \vep_\mo(\fraka')\barot 1)\\
&=&(1\barot  a_0) \diamond_\mo (\id \barot  P_A )(1\barot \fraka'))=(1\barot  a_0)\diamond_\mo (1\barot P_A  (\fraka') )=1 \barot \fraka.
\end{eqnarray*}

This concludes the verification of the four properties for $\sha_\mo(A)$ to be a bialgebra.
\end{proof}

\subsection{The Hopf algebra structure}
\mlabel{ss:hopf}

\begin{defn}
A {\bf filtered bialgebra} is a bialgebra $(A, m, \mu, \Delta, \vep)$ together with an increasing filtration $A_n, n\geq 0,$ such that
$$A=\cup_{n=0}^\infty A_n, \quad A_mA_n\subseteq A_{m+n}, \quad \Delta(A_k)\subseteq \sum_{m+n=k}A_m\ot A_n, \quad m, n, k\geq 0.$$
A filtered bialgebra $A$ is called {\bf connected} if  $A_0={\bf k}$.
\end{defn}
The following theorem is from ~\mcite{DM}.
\begin{prop}
A connected filtered bialgebra is a Hopf algebra.
\mlabel{pp:conn}
\end{prop}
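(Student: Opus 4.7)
The plan is to construct an antipode $S:A\to A$ as the convolution inverse of $\id_A$ in $\End(A)$, where the convolution is defined by $f\star g := m\circ(f\ot g)\circ \Delta$ and has unit $u:=\mu\circ \vep$. The connectedness of the filtration will supply local nilpotence which makes a geometric series converge on each filtered piece, so that $\id = u + (\id-u)$ becomes invertible under $\star$.

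First I would set $J:=\id-u\in\End(A)$ and establish the key nilpotence statement: $J^{\star k}|_{A_n}=0$ whenever $k>n$. Since $A_0=\bfk\cdot 1$ and $J(1)=0$, the map $J$ annihilates $A_0$. The filtration compatibility of $\Delta$ implies, by induction on $k$, that for $a\in A_n$ the iterated coproduct $\Delta^{(k-1)}(a)$ lies in $\sum A_{n_1}\ot\cdots\ot A_{n_k}$ with $n_1+\cdots+n_k\leq n$. Any summand with some $n_i=0$ is killed by $J^{\ot k}$, so only tensors with every $n_i\geq 1$ contribute; combined with $\sum n_i\leq n$ this forces $k\leq n$. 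Hence $J^{\star k}(a)=m^{(k-1)}\circ J^{\ot k}\circ \Delta^{(k-1)}(a)=0$ for $k>n$.

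Next I would define
$$S := \sum_{k\geq 0}(-1)^k J^{\star k},$$
which is a finite sum on each $A_n$ and so a well-defined element of $\End(A)$. Because $\id=u+J$ and $u$ is the unit for $\star$, a telescoping computation yields
$$\id\star S \;=\; (u+J)\star \sum_{k\geq 0}(-1)^k J^{\star k} \;=\; u + \sum_{k\geq 1}\bigl((-1)^k + (-1)^{k-1}\bigr)J^{\star k} \;=\; u,$$
and an identical computation on the other side gives $S\star\id=u$. This is exactly the antipode identity $m(\id\ot S)\Delta=m(S\ot \id)\Delta=\mu\vep$, so $S$ endows $A$ with a Hopf algebra structure.

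The main obstacle is the filtration bookkeeping underlying the nilpotence step: one must combine $\Delta(A_k)\subseteq \sum_{i+j=k}A_i\ot A_j$ (iterated to control $\Delta^{(k-1)}$), the multiplicativity $A_iA_j\subseteq A_{i+j}$ (used to see that $m^{(k-1)}$ respects filtration), and $u(A_n)\subseteq A_0$ together with $J|_{A_0}=0$, to certify that $J^{\star k}$ really does vanish above filtration level $k-1$. Once this bookkeeping is in place, everything else is formal manipulation in the convolution algebra $(\End(A),\star)$.
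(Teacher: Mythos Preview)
The paper does not give its own proof of this proposition; it simply quotes the result from Manchon's survey~\mcite{DM}. Your argument is correct and is precisely the standard proof one finds there: show that $J=\id-\mu\vep$ is locally nilpotent for convolution using the connected filtration, then invert $\id=u+J$ by the geometric series $S=\sum_{k\geq 0}(-1)^kJ^{\star k}$.

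One small remark on the bookkeeping: the multiplicativity $A_iA_j\subseteq A_{i+j}$ is not actually needed for the nilpotence step $J^{\star k}|_{A_n}=0$ for $k>n$, since the vanishing already occurs at the level of $J^{\ot k}\circ\Delta^{(k-1)}$ before $m^{(k-1)}$ is applied. It would be needed only if you wanted to show additionally that $S$ preserves the filtration, which is not required for the Hopf algebra conclusion.
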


\begin{theorem}
If $A=\cup_{n\geq 0}A_n$ is a connected filtered bialgebra, then $\sha_\mo (A)$ is also a connected filtered bialgebra. In particular, $\sha_\mo (A)$ is a Hopf algebra.
\mlabel{thm:hopf}
\end{theorem}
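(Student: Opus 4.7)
The plan is to endow $\sha_\mo(A)$ with an increasing filtration combining the filtration on $A$ with the tensor depth, and then invoke Proposition~\mref{pp:conn}. For a pure tensor $\fraka = a_0\ot a_1\ot\cdots\ot a_m\in A^{\ot(m+1)}$, set
$$\deg_\mo(\fraka) := m + \sum_{i=0}^m \deg_A(a_i), \qquad \deg_A(a):=\min\{n\,|\,a\in A_n\},$$
and let $\sha_\mo(A)_n$ be the $\bfk$-span of the pure tensors with $\deg_\mo\leq n$. This is an exhaustive increasing filtration, and $\sha_\mo(A)_0 = \bfk$: any pure tensor of depth $m\geq 1$ has $\deg_\mo\geq 1$, while in depth zero the pure tensors lie in $A$, whose degree-zero part is $A_0=\bfk$ by connectedness of $A$. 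In view of Theorem~\mref{thm:bialg}, what remains is to check that $\dm$ and $\Delta_\mo$ respect this filtration.

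For multiplicativity $\sha_\mo(A)_p\dm\sha_\mo(A)_q\subseteq\sha_\mo(A)_{p+q}$, I would induct on $m+n$ using the recursion~(\mref{eq:mrbprod}). The cases $m=0$ or $n=0$ reduce to the filtration property $\deg_A(a_0 b_0)\leq\deg_A(a_0)+\deg_A(b_0)$ of $A$. For $m,n\geq 1$ the sum has three terms; attaching a prefix of the form $a_0 b_0\ot(\cdot)$ raises the tensor depth by $1$ and the $\deg_A$-contribution by at most $\deg_A(a_0)+\deg_A(b_0)$, and the inductive hypothesis controls the inner products $(1\ot\fraka')\dm\frakb'$, $\fraka'\dm(1\ot\frakb')$ and $\fraka'\dm\frakb'$. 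A short bookkeeping shows all three summands stay within $\deg_\mo\leq p+q$; the weight term $\kappa a_0 b_0(\fraka'\dm\frakb')$ even drops two in depth, which more than absorbs the $\deg_A$-contribution of the absorbed $a_0 b_0$ factor.

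The coproduct compatibility $\Delta_\mo(\sha_\mo(A)_k)\subseteq\sum_{p+q=k}\sha_\mo(A)_p\oot\sha_\mo(A)_q$ is the main technical point, and I would prove it by induction on the tensor depth $m$ of $\fraka\in A^{\ot(m+1)}$. The base case $m=0$ is immediate since then $\Delta_\mo(\fraka)=\Delta(\fraka)$ and $A$ is already a filtered bialgebra. For $m\geq 1$, write $\fraka=a_0\dm P_A(\fraka')$ with $\fraka'\in A^{\ot m}$; the multiplicativity of $\Delta_\mo$ from Theorem~\mref{thm:bialg} combined with the now-established filtration property of $\dm$ on $\sha_\mo(A)\oot\sha_\mo(A)$ reduces the claim to bounding $\Delta_\mo(P_A(\fraka'))$. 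Expand via the Hochschild cocycle~(\mref{eq:cocyc}): the term $P_A(\fraka')\oot 1$ has bidegree $(\deg_\mo(\fraka')+1,0)$; for $(\id\oot P_A)\Delta_\mo(\fraka')$, the operator $\id\oot P_A$ shifts the right-hand factor degree by exactly $+1$, and the inductive hypothesis places $\Delta_\mo(\fraka')$ inside $\sum_{p+q=\deg_\mo(\fraka')}\sha_\mo(A)_p\oot\sha_\mo(A)_q$; the residual $\lambda\fraka'\oot 1$ has total degree $\deg_\mo(\fraka')<\deg_\mo(\fraka')+1$. Summing gives the desired bound.

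These two filtration-compatibility statements, together with Theorem~\mref{thm:bialg}, exhibit $\sha_\mo(A)$ as a connected filtered bialgebra, and Proposition~\mref{pp:conn} then provides the antipode and hence the Hopf algebra structure. The main obstacle is not conceptual but the careful accounting of degrees across the three terms in the multiplication recursion and the three terms in the cocycle; the key observation that makes everything balance is that $P_A$ raises $\deg_\mo$ by exactly $1$, perfectly matching the $+1$ shift on both sides of the cocycle identity.
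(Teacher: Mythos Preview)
Your proposal is correct and follows essentially the same approach as the paper: the same filtration $\deg_\mo(\fraka)=m+\sum_i\deg_A(a_i)$, the same induction on $m+n$ for the product via Eq.~(\mref{eq:mrbprod}), and the same induction for the coproduct via the cocycle formula~(\mref{eq:cocyc}) using that $P_A$ raises $\deg_\mo$ by exactly one. The only cosmetic difference is that the paper inducts on the filtration degree $k$ for the coproduct step while you induct on the tensor depth $m$; these are equivalent since passing from $\fraka$ to $\fraka'$ strictly decreases both.
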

\begin{proof}
For $0\neq a \in A$, we define the degree of $a$ by
$$ \deg(a)=\min\{ k\,|\, a\in A_k\}.$$
In other words, $\deg(a)=k$ for $a\in A_k\setminus A_{k-1}, k\geq 0\ (\text{we set}\  A_{-1}=0).$
Denote $\frakA:=\sha_\mo(A)$.
For  $0\neq \fraka=a_0\ot \cdots \ot a_m\in A^{\ot (m+1)}\subseteq \sha_\mo (A)$, we define
$$ \deg(\fraka):= \deg(a_0)+\cdots + \deg(a_m)+m$$
and let
$\mathfrak{A}_k$ denote the linear span of pure tensors $\fraka\in\frakA$ with $\deg(\fraka)\leq k$.
Then
$\mathfrak{A}_0=A_0={\bf k}$ and, for $\fraka=a_0\ot \fraka'\in A^{\ot (m+1)}$ with $\fraka'\in A^{\ot m}$, we have
\begin{equation}
\deg(\fraka)=\deg(a_0)+\deg(\fraka')+1, \quad \fraka'\in \frakA_n \Longrightarrow \fraka \in \frakA_{n+\deg(a_0)+1}.
\mlabel{eq:deg}
\end{equation}

We will show that $\mathfrak{A}$ is filtered. Then by $\frakA_0=\bfk$, $\frakA$ is connected which completes the proof of the theorem.

First we apply the induction on $m+n$ to prove
$$\fraka\diamond_\mo \frakb\in\mathfrak{A}_{m+n}$$
for pure tensors $\fraka\in \mathfrak{A}_m$ and $\frakb\in\mathfrak{A}_n$. If $m+n=0,$ then $m=n=0$. Then the equation holds since $\mathfrak{A}_0={\bfk}.$ For a given $k\geq 0$, assume that the equation holds for $m+n=k$ and consider the case when $m+n=k+1$. If $m=0$ or $n=0$, then $\fraka\in \bfk$ or $\frakb\in \bfk$ and so the inclusion clearly holds. Thus we assume $m, n \geq 1.$ If $\fraka\in A$ or $\frakb\in A$, then the equation holds since $A$ is filtered. So we can further take $\fraka\in A^{\ot p}, \frakb \in A^{\ot q}$ with $p, q\geq 2$. Then we can write $\fraka= a_0 \ot \fraka', \frakb= b_0 \ot \frakb'$ with $\fraka'= a_1 \ot \cdots \ot  a_p, \frakb'= b_1 \ot  \cdots \ot  b_q.$
By Eq.~(\mref{eq:mrbprod}), we have
$$\fraka\diamond_\mo \frakb=( a_0 \ot \fraka')\diamond_\mo ( b_0 \ot \frakb')= a_0  b_0 \ot \big((1 \ot \fraka')\diamond_\mo  \frakb'\big) + a_0  b_0 \ot \big(\fraka'\diamond_\mo (1 \ot \frakb')\big) -\lambda^2  a_0   b_0 (\fraka'\diamond_\mo \frakb').$$
Consider the first term on the right hand side. The first equation in Eq.~(\mref{eq:deg}) gives
$$\deg(\fraka') = \deg(\fraka) - \deg(a_0) - 1 \leq m-\deg(a_0) -1\,\text{ and }\,\deg(\frakb') = \deg(\frakb) - \deg(b_0) - 1 \leq n-\deg(a_0) -1,$$
whence
$$\fraka'\in \frakA_{m-\deg(a_0) -1},\, 1\ot \fraka'\in \frakA_{m-\deg(a_0)}\,\text{ and }\,  \frakb'\in \frakA_{n-\deg(b_0) -1}.$$
Thus by the induction hypothesis and the second equation in Eq.~(\mref{eq:deg}), we obtain
$$(1\ot \fraka')\diamond_\mo \frakb'\in \frakA_{m+n-\deg(a_0)-\deg(b_0)-1}\,\text{ and }\,
 a_0 b_0 \ot \big((1 \ot \fraka')\diamond_\mo  \frakb'\big) \in \frakA_{m+n}.$$
The same argument applies to the second and the third terms. Thus we obtain $\fraka\diamond_\mo \frakb\in\mathfrak{A}_{m+n},$ completing the induction.

Second, we also use the induction on $k\geq 0$ to show
$$\Delta_\mo (\mathfrak{A}_k)\subseteq \sum_{m+n=k}\mathfrak{A}_m\oot \mathfrak{A}_n.$$
It is true for $k=0$ since $\mathfrak{A}_0={\bfk}\subseteq A$ and $A$ is filtered. Assume that the equation holds for a given $k\geq 0$ and consider $\frakA_{k+1}$. For $\fraka\in \frakA_{k+1}$, if $\fraka\in A$, then there is nothing to prove since $A$ is filtered. So we can take $\fraka\in A^{\ot i}$ with $i\geq 2$ and write $\fraka=a_0\ot \fraka'$.  Then the multiplicativity of $\Delta_\mo$ and the cocycle condition in Eq.~(\mref{eq:cocyc}) lead to
  \begin{eqnarray*}
\Delta_\mo (\fraka)&=&\Delta_\mo( a_0 \diamond_\mo P_A  (\fraka'))\\
  &=&\Delta ( a_0)\diamond_\mo \Delta_\mo(P_A  (\fraka'))\\
  &=&\Delta ( a_0)\diamond_\mo \big(P_A  (\fraka')\barot 1+(\id\barot P_A  )\Delta_\mo(\fraka')+ \lambda \fraka'\barot 1\big).
\end{eqnarray*}
By the induction hypothesis, we have
$$\Delta_\mo(\fraka')\subseteq \sum_{m+n=\deg(\fraka')}\frakA_m \oot \frakA_n.$$
Then by Eq.~(\mref{eq:deg}) and $\sum_{m+n=\ell}\frakA_m\oot \frakA_n \subseteq \sum_{m+n=\ell+1}\frakA_m\oot \frakA_n$, we obtain
$$ P_A  (\fraka')\barot 1+(\id\barot P_A  )\Delta_\mo(\fraka')+ \lambda \fraka'\barot 1 \in \sum_{m+n=\deg(P_A(\fraka'))} \frakA_m\oot \frakA_n.$$
Then by Eq.~(\mref{eq:deg}) again, we obtain
$$ \Delta ( a_0)\diamond_\mo \big(P_A  (\fraka')\barot 1+(\id\barot P_A  )\Delta_\mo(\fraka')+ \lambda \fraka'\barot 1\big) \in \sum_{m+n=\deg(a_0)\atop+\deg(P_A(\fraka'))} \frakA_m\oot \frakA_n
=\sum_{m+n=k+1}\frakA_m\oot \frakA_n.$$
This completes the induction.
\end{proof}

\subsection{The case of $A=\bfk$ revisited}
\mlabel{ss:case2}
We end the paper by returning to the special case of $\sha_\mo(A)$ when $A$ is the base ring $\bfk$, first considered in Section~\mref{ss:case}. As what we achieved for the product $\diamond_\mo$ of $\sha_\mo(\bfk)$ in Proposition~\mref{pp:mult}, we will give an explicit formula for the coproduct of $\sha_\mo(\bfk)$.

Note that $\bfk$ is naturally a connected $\bfk$-bialgebra with its coproduct $$\Delta: \bfk\rightarrow \bfk \ot \bfk,  \quad  x\mapsto x\ot {\bf 1}\quad \text {for all } x\in \bfk,$$
counit
$$\vep=id_{\bfk}: \bfk\rightarrow \bfk$$
and filtration
$$ \bfk_k=\bfk \quad \text{for all } k\geq 0.$$

As in Section~\mref{ss:case}, $\sha_\mo(\bfk)=\bigoplus_{k\geq 0} {\bfk}^{\ot (k+1)}=\bigoplus_{k\geq 0} \bfk u_k$. The coproduct $\Delta_\mo$ in Section~\mref{ss:bialg} becomes
\begin{align*}
\Delta_\mo:& \sha_\mo(\bfk) \rightarrow \sha_\mo(\bfk) \oot \sha_\mo(\bfk), \\
&u_k \mapsto \left\{\begin{array}{ll}
 u_0 \oot u_0, & k=0, \\
P_\bfk  (u_{n-1})\barot 1+(\id\barot P_\bfk)\Delta_\mo(u_{n-1})+ \lambda u_{n-1}\barot 1, & k\geq 1.
\end{array} \right .
\end{align*}
Then we have
\begin{prop}
$\Delta_\mo(u_n)=\sum\limits_{r=0}^n u_r \barot u_{n-r} +\lambda \sum\limits_{r=0}^{n-1} u_r \barot u_{n-r-1}\  \text {for} \ n\geq1.$
\end{prop}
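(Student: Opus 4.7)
The plan is a direct induction on $n \geq 1$ using the defining recursion for $\Delta_\mo$ specialized to $A = \bfk$. First I would record the two elementary identities that drive everything: since $P_A(\fraka) = 1 \ot \fraka$ we have $P_\bfk(u_m) = u_{m+1}$, and consequently $(\id \barot P_\bfk)(u_r \barot u_s) = u_r \barot u_{s+1}$. Also, the unit $1$ appearing in the cocycle formula~(\mref{eq:cocyc}) is $u_0$. With these observations, the recursion defining $\Delta_\mo$ on $\sha_\mo(\bfk)$ simplifies to
\begin{equation*}
\Delta_\mo(u_n) = u_n \barot u_0 + (\id \barot P_\bfk)\Delta_\mo(u_{n-1}) + \lambda\, u_{n-1} \barot u_0 \quad \text{for } n \geq 1.
\end{equation*}

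For the base case $n = 1$, one computes directly from $\Delta_\mo(u_0) = u_0 \barot u_0$ that
\begin{equation*}
\Delta_\mo(u_1) = u_1 \barot u_0 + u_0 \barot u_1 + \lambda\, u_0 \barot u_0,
\end{equation*}
which matches the claimed formula. For the inductive step, assume the formula for $n-1$:
\begin{equation*}
\Delta_\mo(u_{n-1}) = \sum_{r=0}^{n-1} u_r \barot u_{n-1-r} + \lambda \sum_{r=0}^{n-2} u_r \barot u_{n-2-r}.
\end{equation*}
Applying $\id \barot P_\bfk$ shifts the second tensor factor by one, yielding
\begin{equation*}
(\id \barot P_\bfk)\Delta_\mo(u_{n-1}) = \sum_{r=0}^{n-1} u_r \barot u_{n-r} + \lambda \sum_{r=0}^{n-2} u_r \barot u_{n-1-r}.
\end{equation*}

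Substituting into the recursion, the term $u_n \barot u_0$ combines with $\sum_{r=0}^{n-1} u_r \barot u_{n-r}$ to give $\sum_{r=0}^{n} u_r \barot u_{n-r}$, while $\lambda\, u_{n-1} \barot u_0$ combines with $\lambda \sum_{r=0}^{n-2} u_r \barot u_{n-1-r}$ to give $\lambda \sum_{r=0}^{n-1} u_r \barot u_{n-1-r}$, which is exactly the claimed expression. There is no real obstacle here: the formula is precisely the fixed point of the recursion, and the only minor bookkeeping is to verify that the boundary terms $u_n \barot u_0$ and $\lambda\, u_{n-1} \barot u_0$ slot correctly into the two summations. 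The argument is self-contained given Theorem~\mref{thm:bialg} and the specialization described in Section~\mref{ss:case2}.
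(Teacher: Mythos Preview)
Your proposal is correct and follows essentially the same route as the paper's own proof: induction on $n\geq 1$, using the specialization of the cocycle recursion $\Delta_\mo(u_n)=u_n\barot u_0+(\id\barot P_\bfk)\Delta_\mo(u_{n-1})+\lambda u_{n-1}\barot u_0$, computing the base case directly from $\Delta_\mo(u_0)=u_0\barot u_0$, and then absorbing the two boundary terms into the shifted sums in the inductive step. The only cosmetic difference is that you make the identities $P_\bfk(u_m)=u_{m+1}$ and $(\id\barot P_\bfk)(u_r\barot u_s)=u_r\barot u_{s+1}$ explicit at the outset, which is a helpful clarification but not a change in strategy.
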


\begin{proof}
We use induction on $n\geq 1$. The initial step of $n=1$ follows from
\begin{eqnarray*}
\Delta_\mo(u_1)&=& P_A  (u_0)\barot 1+(\id\barot P_A  )\Delta_\mo(u_0)+ \lambda u_0\barot 1\\
&=&u_1\barot 1+(\id\barot P_A  )(u_0\oot u_0)+ \lambda u_0\barot 1\\
&=&u_1\barot u_0 +u_0\oot u_1+ \lambda u_0\barot u_0.
\end{eqnarray*}
Now for the induction step of $n\geq 2,$ applying the induction hypothesis, we obtain
\begin{eqnarray*}
\Delta_\mo(u_n)&=&P_A  (u_{n-1})\barot 1+(\id\barot P_A  )\Delta_\mo(u_{n-1})+ \lambda u_{n-1}\barot 1\\
&=& u_n\barot 1+(\id\barot P_A  )\left(\sum\limits_{r=0}^{n-1} u_r \barot u_{n-1-r} +\lambda \sum\limits_{r=0}^{n-2} u_r \barot u_{n-2-r}\right) + \lambda u_{n-1}\barot 1\\
&=& u_n\barot 1+\sum\limits_{r=0}^{n-1} u_r \barot u_{n-r}+ \lambda \sum\limits_{r=0}^{n-2} u_r \barot u_{n-1-r}+\lambda u_{n-1}\barot 1\\
&=&\sum\limits_{r=0}^n u_r \barot u_{n-r} +\lambda \sum\limits_{r=0}^{n-1} u_r \barot u_{n-1-r}.
\end{eqnarray*}
Hence we complete the induction.
\end{proof}

Further, from Eq.~(\mref{eq:counit}), the counit is
$$\vep_\mo: \sha_\mo(\bfk)\rightarrow \bfk, u_n \mapsto (-\lambda)^n u_0.$$

\smallskip

\noindent {\bf Acknowledgements}: This work was supported by the National Natural Science Foundation of China (Grant No.~11771190).

\end{document}